\let\oldmarginpar\marginpar
\renewcommand{\marginpar}[1]{\oldmarginpar{\scriptsize\texttt{\color{red}{#1}}}}
\newcommand{\E}{\mathbf{E}}
\renewcommand{\P}{\mathbf{P}}
\renewcommand {\epsilon}{\varepsilon}
\newtheorem{thm}{Theorem}[section]
\newtheorem{prp}[thm]{Proposition}
\newtheorem{cor}[thm]{Corollary}
\newtheorem{lem}[thm]{Lemma} 
\theoremstyle{definition}
\newtheorem{exa}[thm]{Example} 
\newtheorem{rem}[thm]{Remark}
\DeclareMathSymbol{\ophi}{\mathalpha}{letters}{"1E}
\newcommand{\e}{\varepsilon}
\renewcommand{\phi}{\varphi}
\newcommand{\be}{\begin{equation}}
\newcommand{\ee}{\end{equation}}
\newcommand{\ben}{\begin{equation*}}
\newcommand{\een}{\end{equation*}}
\newcommand{\ba}{\begin{equation}\begin{aligned}}
\newcommand{\ea}{\end{aligned}\end{equation}}
\DeclareMathOperator{\Law}{Law}
\renewcommand{\i}{\mathrm{i}}
\newcommand{\ex}{\mathrm{e}}
\newcommand{\di}{\mathrm{d}}
\newcommand{\cN}{\mathcal{N}}
\newcommand{\rB}{\mathscr{B}}
\newcommand{\rF}{\mathscr{F}}
\newcommand{\rG}{\mathscr{G}}
\newcommand{\bD}{\mathbb{D}}
\newcommand{\bF}{\mathbb{F}}
\newcommand{\bI}{\mathbb{I}}
\newcommand{\bR}{\mathbb{R}}
\newfont{\cyrfnt}{wncyr10}
\def\J3{\cyrfnt{\rm \u{\cyrfnt I}}}
\def\j3{\cyrfnt{\rm \u{\cyrfnt i}}}
\numberwithin{equation}{section}
\begin{document}
\title{Drift Estimation for a L\'evy-Driven Ornstein-Uhlenbeck Process with Heavy Tails}

\date{\today}


\author{Alexander Gushchin\footnote{Steklov Mathematical Institute of Russian Academy of Sciences, 8 Gubkina St., Moscow,
119991, Russia; 
gushchin@mi-ras.ru} \footnote{Lomonosov Moscow State University, Moscow, Russia} \footnote{National Research University Higher School of Economics, Moscow, Russia},\ \  Ilya Pavlyukevich\footnote{Institute of Mathematics, Friedrich Schiller University Jena, Ernst--Abbe--Platz 2,
07743 Jena, Germany; ilya.pavlyukevich@uni-jena.de},\ \ and\ Marian Ritsch\footnote{Institute of Mathematics, 
Friedrich Schiller University Jena, Ernst--Abbe--Platz 2,
07743 Jena, Germany; carl.christian.marian.ritsch@uni-jena.de}}
\maketitle

\begin{abstract}
We consider the problem of estimation of the drift parameter of an 
ergodic Ornstein--Uhlenbeck type process driven by a L\'evy process with heavy tails. The process is observed continuously on a long time 
interval $[0,T]$, $T\to\infty$. We prove that the statistical model is locally asymptotic mixed normal and the maximum
likelihood estimator is asymptotically efficient.
\end{abstract}


\smallskip

\noindent
\textbf{Keywords:} L\'evy process, Ornstein--Uhlenbeck type process, local asymptotic mixed normality, heavy tails, 
regular variation, maximum likelihood estimator, asymptotic observed information

\smallskip

\noindent
\textbf{2010 Mathematics Subject Classification:}
62M05$^*$ Markov processes: estimation;
60F05 Central limit and other weak theorems;
60J75 Jump processes

\section{Introduction, motivation, previous results}

In his paper, we deal with an estimation of the drift parameter $\theta>0$ of an ergodic 
one-dimensional Ornstein--Uhlenbeck process $X$ driven by a L\'evy process:
\ba
X_t=X_0-\theta\int_0^t X_s\,\di s+ Z_t,\quad t\geq 0.
\ea
The process $Z$ is a one-dimensional L\'evy process with known characteristics and 
with infinite variance.
The process $X$ is observed continuously on a long time interval $[0,T]$, $T\to\infty$. The problem is to study asymptotic properties
of the corresponding statistical model and to show that the maximum likelihood estimator of $\theta$ is asymptotically efficient in an
appropriate sense. Although the continuous time observations are far from being realistic in applications, they are of theoretical 
importance since they can be considered as a limit of high frequency discrete models.

Since we deal with continuous observations, it is natural to assume that the Gaussian component of the L\'evy 
process $Z$ is not degenerate. In this case, the laws of observations corresponding to different values of $\theta$
are equivalent and the likelihood ratio has an explicit form.

There are a lot of papers devoted to inference for L\'evy driven SDEs. Most of the literature treats the case of discrete time 
observations both in the high and low frequency setting. A general theory for the 
likelihood inference for continuously observed jump-diffusions can be found in \cite{Soerensen-91}.

A complete analysis of the drift estimation for 
continuously observed ergodic and non-ergodic Ornstein--Uhlenbeck process driven by a Brownian motion 
can be found in \cite[Chapter 8.1]{hoepfner2014asymptotic}.

For continuously observed square integrable L\'evy driven Ornstein--Uhlenbeck processes,
the local asymptotic normality (LAN) of the model and the asymptotic efficiency of the maximum likelihood estimator
of the drift
have been derived by \cite{mai2012diss,Mai-14} 
with the help of the theory of exponential families, see \cite{KuechlerS-97}.

High frequency estimation of a square integrable L\'evy driven Ornstein--Uhlenbeck process with non-vanishing
Gaussian component has been performed by \cite{mai2012diss,Mai-14}.
\cite{kawai2013local} studied the asymptotics of the Fisher information for three characterizing parameters of Ornstein--Uhlenbeck
processes with jumps under low frequency and high frequency discrete sampling. The existence of all moments of the L\'evy process 
was assumed.
\cite{tran2017lan} considered  the ergodic Ornstein--Uhlenbeck process
driven by a Brownian motion and a compensated Poisson process, whose drift and diffusion
coefficients as well as its jump intensity depend on unknown parameters.
He obtained the LAN property of the model in the high frequency setting.
 
We also mention the works by \cite{hu2007parameter,hu2009least,yaozhong2009singularity,long2009least,zhang2013least}
devoted to the least-square estimation of parameters of the Ornstein--Uhlenbeck process driven by an $\alpha$-stable L\'evy process.

There is vast literature devoted to parametric inference for discretely observed 
L\'evy processes (see, e.g.\ a survey by \cite{masuda2015parametric}) and L\'evy driven SDEs. 
More results on the latter topic can be found e.g.\ in
\cite{masuda2013convergence,ivanenko2014lan,kohatsu2017lan,masuda2019non,uehara2019statistical,clement2015local,clement2019lamn,ClementG-19,nguyen2018estimation,gloter2018jump}
and the references therein.

\medskip

In this paper, we fill the gap and analyse continuously observed ergodic Ornstein--Uhlenbeck process driven
by a L\'evy process with heavy regularly varying tails of the index $-\alpha$, $\alpha\in (0,2)$, in the presence of a Gaussian component. 
It turns out that the log-likelihood in this model is quadratic, however the model is not
asymptotically normal and we prove only the local asymptotic mixed normality (LAMN) property.
We refer to \cite{lecam2000,hoepfner2014asymptotic} for the general theory of estimation for LAMN models.

The fact that the prelimiting log-likelihood is quadratic automatically implies that the maximum likelihood estimator is asymptotically efficient in the sense
of Jeganathan's convolution theorem and attains the local asymptotic minimax bound.
Another feature of our model is that the asymptotic observed information has spectrally positive $\alpha/2$-stable distribution. 
This implies that the limiting law of the maximum likelihood estimator has tails of the order $\exp(-x^\alpha)$ and hence finite moments of all orders. 

\medskip

\noindent  
\textbf{Acknowledgements:} The authors thank the DAAD exchange programme \emph{Eastern Partnership} for financial support. 
A.G.\ thanks Friedrich Schiller University Jena for hospitality.

\section{Setting and the main result}

Consider a stochastic basis $(\Omega,\rF,\bF,\P)$, $\bF$ being right-continuous. 
Let $Z$ be a L\'evy process with the characteristic triplet $(\sigma^2,b,\nu)$ and the L\'evy--It\^o decomposition
\ba
Z_t=\sigma W_t+ b t+ \int_0^t \int_{|z|\leq 1}z\tilde N(\di z,\di s)
+  \int_0^t \int_{|z|> 1}z  N(\di z,\di s) ,
\ea
where $W$ is a standard one-dimensional Brownian motion, $N$ is a Poissonian random measure on $\bR\backslash\{0\} $ with the L\'evy measure $\nu$
satisfying $\int_\bR (z^2\wedge 1)\,\nu(\di z)<\infty$, 
 $\tilde N$ is the compensated 
Poissonian random measure, and $b\in\bR$.

For $\theta\in \bR$, let $X$ be an Ornstein--Uhlenbeck type process being a solution of the SDE
\ba
\label{e:X}
X_t=X_0-\theta\int_0^t X_s\,\di s +  Z_t ,\quad t\geq 0,
\ea
where $\theta\in\bR$ is an unknown parameter.
The initial value $X_0\in\rF_0$ is a random variable whose distribution does not depend on $\theta$.
Note that $X$ has an explicit representation
\ba
X_t=X_0\ex^{-\theta t}-\int_0^t \ex^{-\theta(t-s)}\,\di Z_s,\quad t\geq 0,
\ea
see, e.g.\ \cite[Sections 4.3.5 and 6.3]{Applebaum-09} and \cite[Section 17]{Sato-99}.

Let $\mathbb D=D([0,\infty),\bR)$ be the space of real-valued c\`adl\`ag functions $\omega\colon [0,\infty)\to\bR$ 
equipped with Skorokhod topology and Borel $\sigma$-algebra $\rB(\bD)$. The space
$(\bD,\rB(\bR))$  is Polish, and $\rB(\bD)$
coincides with the $\sigma$-algebra generated by the coordinate projections.
We define a (right-continuous) filtration $\mathbb G=(\rG_t)_{t\geq 0}$ consisting of $\sigma$-algebras
\ba
&\rG_t:=\bigcap_{s> t} \sigma\Big(\omega_r\colon  r\leq s, \omega\in \bD\Big),\quad t\geq 0.
\ea
For each $\theta\in\bR$, the process $X=(X_t)_{t\geq 0}$ induces a measure $\P^\theta$ on the path space $(\bD,\rB(\bD))$. 
Let
\ba
\P^\theta_T=\P^\theta\Big|_{\rG_T}
\ea
be a restriction of $\P^\theta$ to the $\sigma$-algebra $\rG_T$. 

In order to establish the equivalence of the laws $\P^\theta_T$ and $\P^{\theta_0}_T$, $\theta,\theta_0\in \bR$, we have to make the 
following assumption.

\smallskip

\noindent
\textbf{A}$_\sigma$:
The Brownian component of $Z$ is non-degenerate, i.e.\ $\sigma>0$.

\begin{prp}
\label{p:Lratio}
Let \emph{\textbf{A}}$_\sigma$ hold true. Then for each $T>0$, any $\theta,\theta_0\in\bR$
\ba
&\P^\theta_T\sim \P^{\theta_0}_T,
\ea
and the likelihood ratio is given by
\ba
&L_T(\theta_0,\theta)=\frac{\di \P^{\theta}_T}{\di \P^{\theta_0}_T}=
\exp\Big(-\frac{\theta-\theta_0}{\sigma^2} \int_0^T \omega_s\,\di m_s^{(\theta_0)} 
-\frac{(\theta-\theta_0)^2}{2\sigma^2}\int_0^T \omega_s^2\,\di s  \Big),
\ea
where 
\ba
m^{(\theta_0)}_t=\omega_t-\omega_0+\theta_0 \int_0^t \omega_s\, \di s 
-bt
-\sum_{s\leq t} \Delta \omega_s\bI(|\Delta \omega_s|> 1)
- \int_0^t\int_{|x|\leq 1} x \Big(\mu(\di x,\di s)  -\nu(\di x)\di s\Big) 
\ea
is the continuous local martingale component of $\omega$ under the measure $\P^{\theta_0}_T$,
and the random measure 
\ba
\mu(\di x,\di s)=\sum_{s} \bI(\Delta \omega_s\neq 0)\delta_{(\Delta \omega_s, s)}(\di x,\di s)
\ea
is defined by the jumps of $\omega$.
\end{prp}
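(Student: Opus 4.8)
The plan is to read the assertion as a change-of-drift (Girsanov) statement for the canonical semimartingale $\omega$ on $(\bD,\rB(\bD))$. First I would write down the semimartingale characteristics of $\omega$ under $\P^{\theta_0}$. Since $\P^{\theta_0}$ is the law of the solution of \eqref{e:X} with parameter $\theta_0$ driven by $Z$ with triplet $(\sigma^2,b,\nu)$, the canonical process decomposes into the drift $-\theta_0\int_0^\cdot \omega_s\,\di s + b\,\cdot$, a continuous local martingale part, the compensated small-jump integral $\int_0^\cdot\int_{|x|\le 1}x\,(\mu(\di x,\di s)-\nu(\di x)\,\di s)$, and the large-jump sum $\sum_{s\le\cdot}\Delta\omega_s\bI(|\Delta\omega_s|>1)$, where the jumps of $\omega$ are carried by the random measure $\mu$ with compensator $\nu(\di x)\,\di s$. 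Solving this identity for the continuous martingale part produces exactly the process $m^{(\theta_0)}$ displayed in the statement; moreover, since the continuous martingale part of $\omega$ is just the Brownian contribution $\sigma W$ of $Z$, one has $\langle m^{(\theta_0)}\rangle_t=\sigma^2 t$.

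The key observation is that passing from $\P^{\theta_0}$ to $\P^\theta$ alters only the drift of the continuous part, from $-\theta_0\omega$ to $-\theta\omega$, while the diffusion coefficient $\sigma$ and the jump compensator $\nu(\di x)\,\di s$ are the same for every value of the parameter: the jumps of $X$ are precisely those of $Z$ and are unaffected by $\theta$. Here Assumption \textbf{A}$_\sigma$ is indispensable, because $\sigma>0$ places the required drift shift $(\theta-\theta_0)\int_0^\cdot\omega_s\,\di s$ in the range of the continuous martingale part, so that it can be generated by an absolutely continuous change of measure; were $\sigma=0$, this finite-variation term would be observable and the laws would be mutually singular. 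I would then apply Girsanov's theorem for continuous local martingales to the Brownian functional $W^{(\theta_0)}:=m^{(\theta_0)}/\sigma$: the density turning $W^{(\theta_0)}$ into a Brownian motion carrying the $\theta$-drift is the stochastic exponential $L_T(\theta_0,\theta)=\cE\big(-\tfrac{\theta-\theta_0}{\sigma}\int_0^\cdot\omega_s\,\di W^{(\theta_0)}_s\big)_T$, and substituting $\di W^{(\theta_0)}=\sigma^{-1}\,\di m^{(\theta_0)}$ and $\di\langle m^{(\theta_0)}\rangle=\sigma^2\,\di s$ turns this into precisely the claimed expression. A short Itô computation then confirms that under $L_T\,\di\P^{\theta_0}_T$ the process $m^{(\theta)}:=m^{(\theta_0)}+(\theta-\theta_0)\int_0^\cdot\omega_s\,\di s$ is a continuous local martingale with bracket $\sigma^2 t$, while the jump characteristics remain $\nu(\di x)\,\di s$; hence the characteristics match those of $\P^\theta$.

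The main obstacle is to show that $L(\theta_0,\theta)$ is a genuine probability density, i.e.\ a true $\P^{\theta_0}$-martingale with $\E^{\theta_0}[L_T]=1$, rather than merely a non-negative local (hence super-) martingale. The naive route via Novikov's criterion is blocked by exactly the feature that motivates the paper: since $Z$ has infinite variance, the functional $\int_0^T\omega_s^2\,\di s$ is genuinely heavy-tailed under $\P^{\theta_0}$ and does not possess the exponential moments required by Novikov's or Kazamaki's criteria. Instead I would exploit the linear structure. With the stopping times $\tau_n=\inf\{t\ge 0:|\omega_t|>n\}$ the integrand is bounded on $[0,\tau_n]$, so $L_{\cdot\wedge\tau_n}$ is a true uniformly integrable martingale and $\di Q_n:=L_{T\wedge\tau_n}\,\di\P^{\theta_0}_T$ defines a probability measure under which the stopped process $\omega^{\tau_n}$ follows the $\theta$-dynamics up to $\tau_n$. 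Since \eqref{e:X} has a unique strong, hence unique weak, solution (the explicit representation of $X$ provides it) and $\omega$ does not explode under either parameter, so that $\tau_n\uparrow\infty$ a.s.\ under both $\P^\theta$ and $\P^{\theta_0}$, one identifies the limiting measure with $\P^\theta_T$; this upgrades the supermartingale to a true martingale and forces $\E^{\theta_0}[L_T]=1$, giving $L_T\,\di\P^{\theta_0}_T=\di\P^\theta_T$.

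Finally, the full equivalence $\P^\theta_T\sim\P^{\theta_0}_T$, rather than one-sided absolute continuity, follows by symmetry: running the same construction with $\theta$ and $\theta_0$ interchanged yields the reciprocal density $L_T(\theta,\theta_0)=L_T(\theta_0,\theta)^{-1}$. As both densities are strictly positive and have expectation one, the measures are mutually absolutely continuous with Radon--Nikodym derivative exactly $L_T(\theta_0,\theta)$, which completes the proof.
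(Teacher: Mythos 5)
Your argument is correct, but it is genuinely more self-contained than what the paper does: the paper's entire proof is the citation of Theorem III-5-34 in \cite{JacodS-03}, the general Girsanov-type theorem for semimartingales, which delivers local absolute continuity together with the explicit density once one checks that (i) the characteristics of $\omega$ under $\P^{\theta}$ and $\P^{\theta_0}$ differ only in the first (drift) characteristic, with the difference $-(\theta-\theta_0)\int_0^\cdot\omega_s\,\di s$ absolutely continuous with respect to $\langle m^{(\theta_0)}\rangle_t=\sigma^2 t$ (this is where \textbf{A}$_\sigma$ enters, exactly as you say), and (ii) the martingale problem for the $\theta$-characteristics is well posed. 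What you have written out by hand is, in effect, the proof of that cited theorem specialized to this model: your localization with $\tau_n=\inf\{t:|\omega_t|>n\}$, the identification of the stopped law under $\di Q_n=L_{T\wedge\tau_n}\,\di\P^{\theta_0}_T$ with the $\theta$-dynamics, and the upgrade from supermartingale to true martingale via non-explosion are precisely the mechanism behind the uniqueness hypothesis in Jacod--Shiryaev. Two points deserve emphasis in your route, and you did address both: the identification of the stopped law requires weak (local) uniqueness for the SDE \eqref{e:X}, which in this linear model is supplied by the explicit solution formula; and the uniform-integrability step $\E^{\theta_0}[L_T]=1$ uses $\P^\theta(\tau_n\leq T)\to 0$, i.e.\ non-explosion under the \emph{target} measure, not just under $\P^{\theta_0}$. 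The trade-off is the usual one: the citation is shorter and hides the measurability and local-uniqueness technicalities in the reference, while your version makes visible exactly where $\sigma>0$, the heavy tails (ruling out Novikov), and uniqueness are used --- and correctly replaces the unavailable Novikov criterion by the localization/uniqueness argument, which is the right tool here.
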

\begin{proof}
See \cite[Theorem III-5-34]{JacodS-03}.
\end{proof}

Consider a family of statistical experiments
\ba
\label{e:se}
\Big(\bD, \rG_T,\{\P^\theta_T\}_{\theta>0}\Big)_{T>0}.
\ea
Our goal is to establish local asymptotic mixed normality (LAMN) of these experiments
under the assumption that 
the process $Z$ has heavy tails. We make the following assumption.

\smallskip

\noindent
\textbf{A}$_\nu$:
The L\'evy measure $\nu$ has a regularly varying heavy tail of the order $\alpha\in (0,2)$, i.e.\
\ba
H(R):=\int_{|z|> R} \nu(\di z)\in \text{RV}_{-\alpha},\quad R>0.
\ea
In other words, 
$H\colon (0,\infty)\to (0,\infty)$ and there is a
positive function $l=l(R)$ slowly varying at infinity such that  
\ba
H(R)=\frac{l(R)}{R^{\alpha}},\quad R>0.
\ea

For the tail $H$ we construct an \emph{absolutely continuous strictly decreasing} function 
\ba
\tilde H(R)=\alpha\int_R^\infty \frac{H(z)}{z}\,\di z,\quad R>0,
\ea
such that by Karamata's theorem, see e.g.\ \cite[Theorem 2.1 (a)]{resnick2007heavy},
\ba
\lim_{R\to\infty}\frac{\tilde H(R)}{H(R)}=1.
\ea
We introduce the monotone increasing continuous scaling $\{\phi_T\}_{T>0}$ defined by the relation
\ba
\label{e:phi}
\frac{1}{\phi_T}:=\tilde H^{-1}\Big(\frac{1}{T}\Big),
\ea
where $\tilde H^{-1}(R):=\inf\{u>0\colon \tilde H(u)=R\}$ is the (continuous) inverse of $\tilde H$. It is easy to see that
$\phi_T\in \text{RV}_{-1/\alpha}$.

\begin{rem}
We make use of the absolutely continuous strictly decreasing function $\tilde H$ just for convenience in order to 
avoid technicalities connected with the inversion of c\`adl\`ag functions. For instance it holds
$\phi_T\sim \big(H^{\leftarrow}(1/T)\big)^{-1}$ for the generalized inverse $H^\leftarrow(R):=\inf\{u>0\colon H(u)>R\}$, see 
\cite[Chapter 1.5.7]{BinghamGT-87}.
\end{rem}

\begin{exa}
 Let the jump part of the process $Z$ be an $\alpha$-stable L\'evy process, i.e.\ for $\alpha\in (0,2)$ and $c_-,c_+\geq 0$, $c_- +c_+>0$, let
\ba
\nu(\di z)=\Big(\frac{c_-}{|z|^{1+\alpha}}\bI(z<0)+\frac{c_+}{z^{1+\alpha}}\bI(z>0)\Big)\,\di z.
\ea
Then
\ba
H(R)&=\tilde H(R)=\frac{c_-+c_+}{\alpha R^\alpha},\\
\tilde H^{-1}(T)&= \Big(\frac{\alpha}{c_-+c_+}\Big)^{1/\alpha}\frac{1}{T^{1/\alpha}},
\ea
and 
\ba
\phi_T=\Big(\frac{c_-+c_+}{\alpha}\Big)^{1/\alpha}\frac{1}{T^{1/\alpha}}.
\ea
\end{exa}
The main result is the LAMN property of our model.
\begin{thm}
\label{t:main}
Let \emph{\textbf{A}}$_\sigma$ and \emph{\textbf{A}}$_\nu$ hold true. Then 
the family of statistical experiments \eqref{e:se} is locally asymptotically mixed normal at each $\theta_0>0$, namely
for each $u\in\bR$
\ba
\Law\Big(\ln L(\theta_0,\theta_0+\phi_T u)\Big|\P^{\theta_0}_T\Big)\to \cN\sqrt{\frac{\mathcal S^{(\alpha/2)}}{2\sigma^2\theta_0}} u 
- \frac12 \frac{\mathcal S^{(\alpha/2)}}{2\sigma^2\theta_0}u^2  ,\quad T\to\infty,
\ea
where $\cN$ is a standard Gaussian random variable and $\mathcal S^{(\alpha/2)}$ is an independent spectrally positive $\alpha/2$-stable random variable
with the Laplace transform
\ba
\label{e:LT}
\E \ex^{-\lambda \mathcal S^{(\alpha/2)} }=\ex^{-\Gamma(1-\frac{\alpha}{2})\lambda^{\alpha/2} },\quad \lambda\geq 0. 
\ea
\end{thm}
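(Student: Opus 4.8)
The plan is to exploit the exact quadratic form of the log-likelihood supplied by Proposition~\ref{p:Lratio}. Under $\P^{\theta_0}$ the canonical process solves \eqref{e:X} with $\theta=\theta_0$, so its continuous local martingale part is the Brownian part of $Z$, namely $m^{(\theta_0)}=\sigma W$; the finite-variation drift and the jumps do not contribute. Substituting $\theta=\theta_0+\phi_T u$ therefore gives
\ba
\ln L_T(\theta_0,\theta_0+\phi_T u)=u\,\Delta_T-\tfrac12 u^2 I_T,
\ea
where
\ba
\Delta_T:=-\frac{\phi_T}{\sigma}\int_0^T\omega_s\,\di W_s,\qquad I_T:=\frac{\phi_T^2}{\sigma^2}\int_0^T\omega_s^2\,\di s.
\ea
The process $t\mapsto-\frac{\phi_T}{\sigma}\int_0^t\omega_s\,\di W_s$ is a continuous local martingale whose quadratic variation at $t=T$ is precisely $I_T$. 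Consequently the whole assertion reduces to the joint convergence $(\Delta_T,I_T)\Rightarrow(\sqrt{I_\infty}\,\cN,\,I_\infty)$ with $I_\infty=\mathcal{S}^{(\alpha/2)}/(2\sigma^2\theta_0)$ and $\cN$ a standard Gaussian independent of $I_\infty$; inserting this into the first display reproduces the stated limit.

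The core of the argument, and its main difficulty, is the stable limit for $I_T$, equivalently for $\int_0^T\omega_s^2\,\di s$. Since $\theta_0>0$ the process is ergodic and, by Assumption~\textbf{A}$_\nu$, $\omega_s$ inherits a regularly varying tail of index $-\alpha$, so $\omega_s^2$ has an infinite-mean tail of index $\alpha/2\in(0,1)$; the additive functional then grows like $\phi_T^{-2}$ up to slowly varying factors, which is exactly the chosen normalization. First I would split $Z$ into its small-jump part (with $|z|\le1$), together with the Gaussian and drift terms, and its compound-Poisson large-jump part, inducing a decomposition $\omega=\omega^{\le}+\omega^{>}$ (the deterministic initial transient $\omega_0\ex^{-\theta_0 t}$ being harmless). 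The process $\omega^{\le}$ is an ergodic Ornstein--Uhlenbeck process with finite variance, hence $\phi_T^2\int_0^T(\omega^{\le}_s)^2\,\di s=O_\P(\phi_T^2 T)\to0$, and by the Cauchy--Schwarz inequality the cross term is of order $\phi_T\sqrt{T}\to0$ as well. This isolates $\phi_T^2\int_0^T(\omega^{>}_s)^2\,\di s$ as the only contributing term.

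Writing $\omega^{>}_s=\sum_{\tau_i\le s}\xi_i\ex^{-\theta_0(s-\tau_i)}$ for the times $\tau_i$ and sizes $\xi_i$ of the large jumps and expanding the square, the diagonal (self-energy) terms give $\sum_i\xi_i^2\,(1-\ex^{-2\theta_0(T-\tau_i)})/(2\theta_0)\approx\frac{1}{2\theta_0}\sum_i\xi_i^2$, while the off-diagonal terms contribute a sum of products $\xi_i\xi_j$ weighted by $\ex^{-\theta_0|\tau_j-\tau_i|}$. The main obstacle is to show that, after multiplication by $\phi_T^2$, the off-diagonal part is negligible in probability. I would do this by a truncation at level $\e\phi_T^{-1}$: the jumps that actually drive the limit are those of size $\asymp\phi_T^{-1}$, of which there are only $O(1)$ on $[0,T]$ (their count being asymptotically Poisson with mean $\e^{-\alpha}$ by \eqref{e:phi}), so their occurrence times are spread over $[0,T]$ with gaps of order $T$ and the weights $\ex^{-\theta_0|\tau_j-\tau_i|}$ are exponentially small; the contribution of the smaller jumps, on and off the diagonal, is controlled by a truncated-variance estimate that vanishes as $\e\to0$. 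For the surviving diagonal sum I would invoke the convergence of the rescaled jump point process $\sum_i\delta_{(\tau_i/T,\,\phi_T\xi_i)}$ on $[0,1]\times(\bR\setminus\{0\})$ to a Poisson random measure with intensity $\di t\otimes\mu_\alpha(\di x)$, where $\mu_\alpha(|x|>R)=R^{-\alpha}$ by the choice \eqref{e:phi}; here Assumption~\textbf{A}$_\nu$ and $\phi_T\in\mathrm{RV}_{-1/\alpha}$ are exactly what is needed. Pushing forward by $x\mapsto x^2$ gives $\phi_T^2\sum_i\xi_i^2\Rightarrow\sum_k P_k^2=:\mathcal{S}^{(\alpha/2)}$, and the Laplace functional of a Poisson random measure yields $\E\exp(-\lambda\sum_k P_k^2)=\exp\big(-\int_\bR(1-\ex^{-\lambda x^2})\mu_\alpha(\di x)\big)=\exp\big(-\Gamma(1-\tfrac\alpha2)\lambda^{\alpha/2}\big)$, matching \eqref{e:LT}. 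Hence $I_T\Rightarrow I_\infty=\mathcal{S}^{(\alpha/2)}/(2\sigma^2\theta_0)$.

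Finally I would upgrade this to joint convergence. Because $\mathcal{S}^{(\alpha/2)}$ is a functional of the large jumps of $Z$, which are independent of the Brownian motion $W$ driving $\Delta_T$, the convergence $I_T\Rightarrow I_\infty$ can be realized as stable convergence with respect to the jump $\sigma$-field. Since, for each fixed $T$, the process $t\mapsto\Delta_T^{(t)}$ is a continuous local martingale with terminal bracket $I_T$ converging stably to $I_\infty$, the mixed-normal central limit theorem for continuous martingales (via the Dambis--Dubins--Schwarz time change, cf.\ \cite{JacodS-03}) yields $(\Delta_T,I_T)\Rightarrow(\sqrt{I_\infty}\,\cN,I_\infty)$ with $\cN$ standard normal and independent of $I_\infty$, the independence reflecting that $\cN$ comes from the Brownian part while $I_\infty$ is built from the independent large jumps. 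Substituting into $\ln L_T=u\Delta_T-\tfrac12u^2I_T$ gives the limit $\cN\sqrt{\mathcal{S}^{(\alpha/2)}/(2\sigma^2\theta_0)}\,u-\tfrac12\big(\mathcal{S}^{(\alpha/2)}/(2\sigma^2\theta_0)\big)u^2$, which is the asserted LAMN property. The decisive step remains the third one: taming the off-diagonal quadratic terms and promoting the point-process limit to a genuinely stable convergence so that it combines with the Gaussian fluctuations of the score.
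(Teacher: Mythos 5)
Your proposal is correct in substance, and its top-level architecture coincides with the paper's: the exact quadratic form of $\ln L_T$ from Proposition~\ref{p:Lratio}, with $m^{(\theta_0)}=\sigma W$ under $\P^{\theta_0}_T$, reduces the theorem to the joint limit of the score and the observed information, which is precisely the paper's Corollary~\ref{t:XWX2}. Where you genuinely diverge is in the core stable limit for $\phi_T^2\int_0^T X_s^2\,\di s$ (the paper's Theorem~\ref{t:X2}). The paper truncates the jumps at a \emph{growing} level $R_T=T^\rho$ with $\rho\in(\frac{1}{2\alpha},\frac{1}{\alpha})$, applies It\^o's formula to $(X^{\eta^T})^2$ so that the diagonal contribution appears as the quadratic variation $[\eta^T]_T$, whose rescaled Laplace transform is computed directly by Karamata's theorem (Lemma~\ref{l:Slaplace}), and kills the off-diagonal term $\phi_T^2\int_0^T X^{\eta^T}_{s-}\,\di\eta^T_s$ by an explicit conditional computation with Beta-distributed interarrival times and Pareto-product tail bounds (Lemmas~\ref{l:UV} and \ref{l:Xeta}); you instead truncate at the fixed level $1$, expand the square, identify the diagonal limit through convergence of the point process $\sum_i\delta_{(\tau_i/T,\,\phi_T\xi_i)}$ to a Poisson random measure plus the mapping theorem — which indeed reproduces the Laplace transform \eqref{e:LT}, consistent with the normalization \eqref{e:phi} — and control the off-diagonal by a second truncation at level $\e/\phi_T$. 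Both off-diagonal strategies work: yours is probabilistically more transparent (finitely many jumps of size $\asymp\phi_T^{-1}$ separated by gaps of order $T$, truncated-moment bounds for the intermediate jumps), while the paper's buys fully quantitative estimates at the price of the combinatorial computation in Lemma~\ref{l:Xeta}. Two steps in your sketch would need to be written out. First, your ``truncated-variance estimate'' for the intermediate jumps is only asserted; what is actually needed is the first-moment bound $\phi_T^2\,T\,\big(\int_{1<|z|\le\e/\phi_T}|z|\,\nu(\di z)\big)^2\to0$ for the off-diagonal part (the paper's Lemma~\ref{l:moments} supplies exactly the required truncated-moment asymptotics, and the computation goes through for all $\alpha\in(0,2)$, with the $\e^{2-\alpha}$ bound for the diagonal remainder). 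Second, in the final mixing step the paper avoids stable-convergence machinery altogether: since $\eta^T$ and $W$ are independent and the bracket of $M^T=\int X^{\eta^T}\,\di W$ is measurable with respect to the jumps, it computes the conditional characteristic function exactly, after first checking $\phi_T\int_0^T X^T_s\,\di W_s\to0$ in $L^2$; your Dambis--Dubins--Schwarz/stable-convergence argument is standard but, as stated, still requires that same reduction of the score to $\int X^{\eta^T}\,\di W$, because the full bracket $I_T$ is \emph{not} independent of $W$ before the negligible pieces are removed. With these two points filled in, your proof is complete and genuinely different from the paper's in its key lemma.
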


\noindent 
Theorem \ref{t:main} is based on the following key result.
\begin{thm}
\label{t:X2}
Let \emph{\textbf{A}}$_\sigma$ and \emph{\textbf{A}}$_\nu$ hold true. Then for each $\theta_0>0$
\ba
\Law\Big(\phi_T^2 \int_0^T X_s^2\,\di s\Big|\P^{\theta_0}_T\Big)\to  \frac{\mathcal S^{(\alpha/2)}}{2\theta_0},
\ea
where $\mathcal S^{(\alpha/2)}$ is a random variable with the Laplace transform \eqref{e:LT}.
\end{thm}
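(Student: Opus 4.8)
The plan is to linearise the quadratic functional by It\^o's formula, reducing $\int_0^T X_s^2\,\di s$ to the sum of squared jumps $S_T:=\sum_{s\le T}(\Delta Z_s)^2$ plus lower-order remainders. Applying It\^o's formula to $X_t^2$ under $\P^{\theta_0}$, where $X$ solves \eqref{e:X} with drift $-\theta_0 X$, and using $[X]_t=[Z]_t=\sigma^2 t+\sum_{s\le t}(\Delta Z_s)^2$, I obtain the identity
\ba
2\theta_0\int_0^T X_t^2\,\di t = S_T+\sigma^2 T+2\int_0^T X_{t-}\,\di Z_t+X_0^2-X_T^2 .
\ea
After multiplying by $\phi_T^2$ it suffices to prove that $\phi_T^2 S_T$ converges in law to $\mathcal S^{(\alpha/2)}$ and that the three other terms, scaled by $\phi_T^2$, are $o_{\P}(1)$.

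For the leading term, $S_T$ is a subordinator at time $T$ whose L\'evy measure is the image of $\nu$ under $z\mapsto z^2$, with tail $H(\sqrt{x})\in\text{RV}_{-\alpha/2}$. Hence $\E\ex^{-\lambda\phi_T^2 S_T}=\exp(-T\int_{\bR}(1-\ex^{-\lambda\phi_T^2 z^2})\nu(\di z))$, and by Karamata's Tauberian theorem the exponent is asymptotic to $\Gamma(1-\tfrac{\alpha}{2})\lambda^{\alpha/2}\,T\phi_T^{\alpha}\,l(1/\phi_T)$ up to a slowly varying ratio tending to $1$. The defining relation \eqref{e:phi}, i.e.\ $T\tilde H(1/\phi_T)=1$ together with $\tilde H\sim H$, gives $T\phi_T^{\alpha}l(1/\phi_T)\to 1$, so the exponent converges to $\Gamma(1-\tfrac{\alpha}{2})\lambda^{\alpha/2}$, which is exactly \eqref{e:LT}; thus $\phi_T^2 S_T\Rightarrow\mathcal S^{(\alpha/2)}$. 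The term $\phi_T^2\sigma^2 T\to 0$ since $\phi_T^2 T\in\text{RV}_{1-2/\alpha}$ with $\alpha<2$, and $\phi_T^2(X_0^2-X_T^2)\to 0$ in probability because $X_0$ is fixed and $X_T$ is tight (it converges to the stationary law, $\theta_0>0$).

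The essential difficulty is the stochastic integral $J_T:=\phi_T^2\int_0^T X_{t-}\,\di Z_t$, for which a direct second-moment bound is unavailable because $\int_{|z|>1}z^2\,\nu(\di z)=\infty$. I would split the jumps at a fixed level $\delta$ into three ranges, together with the Brownian and drift parts of $Z$. On $|z|\le1$ and on $1<|z|\le\delta/\phi_T$ the integrals, after compensation, are martingales whose predictable brackets, via the Karamata estimate $\int_{|z|\le R}z^2\nu(\di z)\sim\tfrac{\alpha}{2-\alpha}R^2H(R)$, are of order $\tfrac{\delta^{2-\alpha}}{T}\,I_T$ with $I_T:=\phi_T^2\int_0^T X_t^2\,\di t$, plus the Brownian and small-jump brackets of order $\phi_T^2 I_T$; the drift part $b\int X_t\,\di t$ and the compensators are handled through $\int_0^T X_t\,\di t=\theta_0^{-1}(X_0-X_T+Z_T)$ and $Z_T=O_{\P}(T^{1/\alpha}\vee T)$. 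The finitely many big jumps $|z|>\delta/\phi_T$ give $\phi_T\sum_s X_{s-}(\phi_T\Delta Z_s)=\phi_T\cdot O_{\P}(1)$. Altogether these yield $J_T=O_{\P}(\e_T\sqrt{I_T})+o_{\P}(1)$ with $\e_T\to 0$.

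Finally I close the argument by a bootstrap that bypasses the apparent circularity (the bracket bounds contain the very quantity $I_T$ one is estimating). Substituting the bound for $J_T$ into the identity gives $2\theta_0 I_T=\phi_T^2 S_T+o_{\P}(1)+O_{\P}(\e_T\sqrt{I_T})$; since $\phi_T^2 S_T=O_{\P}(1)$, this first forces $I_T=O_{\P}(1)$, and reinserting $I_T=O_{\P}(1)$ yields $J_T=o_{\P}(1)$. Hence $2\theta_0 I_T=\phi_T^2 S_T+o_{\P}(1)\Rightarrow\mathcal S^{(\alpha/2)}$, which is the assertion. The main obstacle is precisely the control of $J_T$: the heavy tails rule out a one-shot $L^2$ bound, and the three-range truncation together with the bootstrap—letting $\delta\to0$ after $T\to\infty$—is what renders the remainder negligible.
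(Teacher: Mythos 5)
Your skeleton is essentially the paper's algebra in disguise: applying It\^o's formula to $X^2$ and isolating $[Z]_T$ is the same computation the paper performs in \eqref{e:aa}, only there it is applied to the heavy part $X^{\eta^T}$ after the splitting \eqref{e:XX}; your Karamata/Laplace-transform argument for $\phi_T^2 S_T\Rightarrow \mathcal S^{(\alpha/2)}$ is exactly Lemma \ref{l:Slaplace}; and the terms $\phi_T^2\sigma^2 T$, $\phi_T^2(X_0^2-X_T^2)$, the drift/compensator parts via $\int_0^T X_t\,\di t=\theta_0^{-1}(X_0-X_T+Z_T)$, and the compensated-martingale parts with brackets $O(\phi_T^2 I_T)$ and $O(\delta^{2-\alpha}I_T/T)$ are all correct or routine to complete (via Lenglart-type domination on $\{I_T\le K\}$, as your bootstrap intends).

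The genuine gap is your treatment of the big-jump part of $J_T$. The claim that the jumps with $|z|>\delta/\phi_T$ contribute $\phi_T\sum_s X_{s-}(\phi_T\Delta Z_s)=\phi_T\cdot O_\P(1)$ tacitly assumes $X_{s-}=O_\P(1)$ at those jump times, and this is false: if $\tau_j<\tau_k$ are two big-jump times, then $X_{\tau_k-}$ contains the residue $J_j\ex^{-\theta_0(\tau_k-\tau_j)}$ of the earlier jump, with $|J_j|\ge \delta/\phi_T\to\infty$. The corresponding terms of your sum are $\phi_T^2 J_jJ_k\ex^{-\theta_0(\tau_k-\tau_j)}\stackrel{\di}{=}\delta^2\, U_TV_T\,\ex^{-\theta_0(\tau_k-\tau_j)}$, where $U_TV_T$ is $O_\P(1)$ with a heavy tail of index roughly $\alpha$ (Lemma \ref{l:UV}) --- so each term is not small, and moment arguments are unavailable since $\E|J^T_k|=\infty$ for $\alpha\le 1$ and $\E|J^T_k|^2=\infty$ for all $\alpha\in(0,2)$. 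Nor does $\delta\to 0$ rescue you: with $\sim\delta^{-\alpha}$ big jumps there are $\sim\delta^{-2\alpha}$ pairs, and a sum of that many iid index-$\alpha$ terms of size $\delta^2 U_TV_T$ is of order $\delta^2(\delta^{-2\alpha})^{1/\alpha}=O_\P(1)$: the shrinking per-pair size exactly balances the growing pair count. The negligibility of this sum rests \emph{entirely} on the exponential damping $\ex^{-\theta_0(\tau_k-\tau_j)}$ being typically $\ex^{-\theta_0\cdot O(T)}$, and making that quantitative requires the law of the interarrival gaps conditioned on the jump count (the Beta densities \eqref{e:betadensity}) combined with the uniform Potter-type product-tail bound of Lemma \ref{l:UV}; this is precisely the paper's Lemma \ref{l:Xeta}, the technical heart of the proof of Theorem \ref{t:X2}, for which your proposal has no counterpart. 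The same lacuna undermines the bootstrap: to extract $I_T=O_\P(1)$ from your identity you must already know the big-jump term is $O_\P(1)$ (or bounded by $\e I_T+O_\P(1)$), which is again the missing estimate. Everything else in your argument can be salvaged, but this step needs a proof of Lemma \ref{l:Xeta} type before the conclusion follows.
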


\begin{cor}
\label{t:XWX2}
Let \emph{\textbf{A}}$_\sigma$ and \emph{\textbf{A}}$_\nu$ hold true. Then for each $\theta_0>0$
\ba
\Law \Big(  \phi_T \int_0^T X_s\,\di W_s,  \phi_T^2 \int_0^T X_s^2\,\di s\Big|\P^{\theta_0}_T\Big)
\to  \Big(\cN \sqrt{\frac{\mathcal S^{(\alpha/2)}}{2\theta_0}}  ,      \frac{\mathcal S^{(\alpha/2)}}{2\theta_0}\Big).
\ea
\end{cor}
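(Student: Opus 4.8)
The plan is to read the pair as martingale-plus-bracket. The process $M_t:=\int_0^t X_s\,\di W_s$ is a continuous local martingale whose quadratic variation at time $T$ equals $\int_0^T X_s^2\,\di s$; hence the second coordinate $\phi_T^2\int_0^T X_s^2\,\di s$ is the quadratic variation of $\phi_T M$, and its marginal convergence to $\eta^2:=\mathcal S^{(\alpha/2)}/(2\theta_0)$ is provided by Theorem~\ref{t:X2}. What remains is to promote this to joint convergence with the first coordinate $\phi_T M_T$, the point being that the Gaussian fluctuation of $\phi_T M_T$ must decouple asymptotically from the heavy-tailed random scale $\eta^2$, which is produced by the large jumps of $Z$ and is therefore independent of the Brownian motion $W$ that drives the integral.

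To expose this decoupling I would split $X$ according to its driving noise. By variation of constants, write $X_s=X^{W}_s+X^{\flat}_s$ with
\ba
X^{W}_s=-\sigma\int_0^s \ex^{-\theta_0(s-r)}\,\di W_r,\qquad
X^{\flat}_s=X_0\ex^{-\theta_0 s}-\int_0^s \ex^{-\theta_0(s-r)}\,\di(Z_r-\sigma W_r),
\ea
so that $X^{\flat}$ is measurable with respect to $\cH:=\sigma(X_0)\vee\sigma(N)$, while $W$ is a Brownian motion independent of $\cH$.

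The technical heart of the proof — and the step I expect to be the main obstacle — is to show that the purely Brownian contributions are asymptotically negligible, i.e.\ in $\P^{\theta_0}$-probability
\ba
\phi_T^2\int_0^T (X^{W}_s)^2\,\di s\to 0\qquad\text{and}\qquad \phi_T\int_0^T X^{W}_s\,\di W_s\to 0.
\ea
Both rest on the decisive fact that $\phi_T^2 T\to0$, which holds precisely because $\alpha<2$ (indeed $\phi_T^2 T\in\text{RV}_{1-2/\alpha}$ with $1-2/\alpha<0$): since $X^{W}$ is a Gaussian Ornstein--Uhlenbeck process relaxing to a stationary law of variance $\sigma^2/(2\theta_0)$, the ergodic theorem gives $\tfrac1T\int_0^T (X^{W}_s)^2\,\di s\to \sigma^2/(2\theta_0)$, which yields the first limit; the second then follows because the quadratic variation of the martingale $\phi_T\int_0^\cdot X^{W}_s\,\di W_s$ is exactly the left-hand side of the first limit, so Lenglart's inequality forces the martingale itself to $0$. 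A Cauchy--Schwarz bound renders the cross term $\phi_T^2\int_0^T X^{W}_s X^{\flat}_s\,\di s$ negligible as well. Consequently $\phi_T^2\int_0^T X_s^2\,\di s=\tilde A_T+o_P(1)$ with $\tilde A_T:=\phi_T^2\int_0^T (X^{\flat}_s)^2\,\di s$, so Theorem~\ref{t:X2} and Slutsky give $\tilde A_T\to\eta^2$ in distribution, and likewise $\phi_T M_T=\phi_T\int_0^T X^{\flat}_s\,\di W_s+o_P(1)$.

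Finally I would condition on $\cH$. Since $X^{\flat}$ is $\cH$-measurable and $W$ is an independent Brownian motion, $\phi_T\int_0^T X^{\flat}_s\,\di W_s$ is, conditionally on $\cH$, a centred Gaussian Wiener integral with conditional variance $\tilde A_T$. Therefore, for every $u,v\in\bR$,
\ba
\E\exp\Big(\i u\,\phi_T\int_0^T X^{\flat}_s\,\di W_s+\i v\,\tilde A_T\Big)
=\E\exp\Big(\big(\i v-\tfrac{u^2}{2}\big)\tilde A_T\Big)\longrightarrow \E\exp\Big(\big(\i v-\tfrac{u^2}{2}\big)\eta^2\Big),
\ea
the last convergence holding because $\tilde A_T\to\eta^2$ in distribution and $z\mapsto\exp((\i v-\tfrac{u^2}{2})z)$ is bounded and continuous on $[0,\infty)$. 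The right-hand side is exactly the characteristic function of $\big(\cN\sqrt{\eta^2},\,\eta^2\big)$ with $\cN$ a standard Gaussian independent of $\eta^2$, which encodes the desired asymptotic independence. Combining this with the two negligibility estimates through Slutsky's theorem delivers the joint convergence asserted in the corollary.
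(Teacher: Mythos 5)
Your proposal is correct, and its decisive step --- conditioning on the jump information so that the stochastic integral against $W$ becomes conditionally Gaussian, then computing the joint characteristic function $\E\exp\big((\i v-\tfrac{u^2}{2})\tilde A_T\big)$ and passing to the limit by bounded continuity --- coincides with the paper's own argument. What differs is the decomposition feeding into that step. The paper splits $X=X^T+X^{\eta^T}$ by \emph{jump size}, truncating at $R_T=T^\rho$: it conditions on $\rF^{\eta^T}_T$ (large jumps only), kills $X^T$ --- which still contains the compensated small jumps, the drift $b_T$ and $X_0$ --- through the explicit variance estimates of Lemma~\ref{l:OUconv} resting on Lemma~\ref{l:moments}, and obtains the convergence of the conditional variance $\phi_T^2\int_0^T(X^{\eta^T}_s)^2\,\di s$ by reusing the internals of the proof of Theorem~\ref{t:X2} (the It\^o identity together with Lemmas~\ref{l:Slaplace} and~\ref{l:Xeta}). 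You split by \emph{driving noise}, $X=X^W+X^\flat$, so that $\cH=\sigma(X_0)\vee\sigma(N)$ carries all jumps (small and large), the drift and $X_0$; your negligibility step then concerns only the purely Brownian part and reduces to the elementary fact $\phi_T^2T\to0$ for $\alpha\in(0,2)$ --- in fact the crude bound $\E(X^W_s)^2\le\sigma^2/(2\theta_0)$ combined with the It\^o isometry makes both of your limits immediate, so the ergodic theorem and Lenglart's inequality are heavier tools than needed, though correctly applied. Your route is more modular: it consumes Theorem~\ref{t:X2} as a black box for the statement $\tilde A_T\to\mathcal S^{(\alpha/2)}/(2\theta_0)$ rather than re-entering its proof, and it needs no further truncation estimates. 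The price is two small points you pass over quickly and should spell out: (i) the Cauchy--Schwarz bound on the cross term requires tightness of $\tilde A_T$, which is not circular but must be derived first, e.g.\ from $\tilde A_T\le 2\phi_T^2\int_0^T X_s^2\,\di s+2\phi_T^2\int_0^T(X^W_s)^2\,\di s$ together with Theorem~\ref{t:X2} and your first negligibility limit; and (ii) the independence $W\perp\cH$, which holds because $W$ and $N$ are independent by the L\'evy--It\^o decomposition and the pair $(W,N)$ is independent of $X_0\in\rF_0$. With these lines added, your argument is a complete and somewhat cleaner proof of the corollary.
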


Proposition \ref{p:Lratio} and Theorem \ref{t:main} allow us to establish asymptotic 
distribution of the maximum likelihood estimator $\hat \theta_T$ of $\theta$. Moreover, the special form of the likelihood ratio
guarantees that $\hat\theta_T$ is asymptotically efficient.

\begin{cor}
\label{c:2}
1. Let \emph{\textbf{A}}$_\sigma$ hold true. Then the maximum likelihood estimator $\hat \theta_T$ of $\theta$ 
satisfies
\ba
\label{e:MLE}
\hat \theta_T=\theta_0-\frac{\int_0^T \omega_s\,\di \omega^c_s}{ \int_0^T  \omega_s^2\,\di s}.
\ea
2. Let \emph{\textbf{A}}$_\sigma$ and \emph{\textbf{A}}$_\nu$ hold true. Then 
\ba
\label{e:AB}
\Law\Big(\frac{\hat \theta_T-\theta_0}{\phi_T}\Big|\P^{\theta_0}_T\Big)
\to\sigma\sqrt{2\theta_0}\cdot \frac{\cN }{\sqrt{ \mathcal S^{(\alpha/2)}}},\quad T\to\infty.
\ea
The maximum likelihood estimator $\hat \theta_T$ is asymptotically efficient in the sense of the convolution 
theorem and the local asymptotic minimax theorem for LAMN models, see \cite[Theorems 7.10 and 7.12]{hoepfner2014asymptotic}.
\end{cor}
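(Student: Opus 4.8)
The plan is to derive the two assertions in turn, the first by an elementary maximisation of the likelihood and the second by transporting the joint limit of Corollary~\ref{t:XWX2} through the continuous mapping theorem.

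For part~1 I would start from the likelihood ratio of Proposition~\ref{p:Lratio} and write the log-likelihood as a function of $\theta$,
\ben
\ln L_T(\theta_0,\theta)=-\frac{\theta-\theta_0}{\sigma^2}\int_0^T\omega_s\,\di m_s^{(\theta_0)}-\frac{(\theta-\theta_0)^2}{2\sigma^2}\int_0^T\omega_s^2\,\di s.
\een
Under $\mathbf{A}_\sigma$ this is a strictly concave quadratic in $\theta$, since the leading coefficient $-\frac{1}{2\sigma^2}\int_0^T\omega_s^2\,\di s$ is $\P^{\theta_0}_T$-a.s.\ negative (the nondegenerate Gaussian component forces $\int_0^T\omega_s^2\,\di s>0$). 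Setting the derivative in $\theta$ to zero gives the unique maximiser $\hat\theta_T=\theta_0-\frac{\int_0^T\omega_s\,\di m_s^{(\theta_0)}}{\int_0^T\omega_s^2\,\di s}$. To recover the stated form \eqref{e:MLE} I would identify $m^{(\theta_0)}$ with the continuous martingale part $\omega^c$: inserting the SDE \eqref{e:X} into the definition of $m^{(\theta_0)}$ collapses it to the Gaussian part $\sigma W$. I would also note that the maximiser does not depend on the reference point used to form the likelihood: replacing $\theta_0$ by $\theta_0'$ shifts $m^{(\theta_0)}$ by the finite-variation term $(\theta_0-\theta_0')\int_0^{\cdot}\omega_s\,\di s$, and the resulting changes in the numerator and in the explicit ``$\theta_0$'' cancel, so $\hat\theta_T$ is a genuine statistic.

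For part~2 I would evaluate the estimator under $\P^{\theta_0}_T$, where the canonical process equals $X$ and $\di\omega^c_s=\sigma\,\di W_s$, obtaining
\ben
\frac{\hat\theta_T-\theta_0}{\phi_T}=-\sigma\,\frac{\phi_T\int_0^T X_s\,\di W_s}{\phi_T^2\int_0^T X_s^2\,\di s}.
\een
Corollary~\ref{t:XWX2} gives the joint convergence of numerator and denominator to $\big(\cN\sqrt{\mathcal S^{(\alpha/2)}/(2\theta_0)},\ \mathcal S^{(\alpha/2)}/(2\theta_0)\big)$. Because $\mathcal S^{(\alpha/2)}>0$ almost surely, the map $(x,y)\mapsto x/y$ is continuous at the limit, so the continuous mapping theorem yields convergence of the ratio to $-\sigma\,\cN\sqrt{2\theta_0/\mathcal S^{(\alpha/2)}}$. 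Since $\cN$ is standard Gaussian and independent of $\mathcal S^{(\alpha/2)}$, the sign is immaterial and the limit equals $\sigma\sqrt{2\theta_0}\,\cN/\sqrt{\mathcal S^{(\alpha/2)}}$, which is \eqref{e:AB}.

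For the efficiency claim I would exploit that the log-likelihood is \emph{exactly} quadratic. In the local parametrisation $\theta=\theta_0+\phi_T u$ it reads $\ln L_T=u\,\Delta_T-\frac12 u^2\Gamma_T$ with central sequence $\Delta_T=-\frac{\phi_T}{\sigma^2}\int_0^T\omega_s\,\di m_s^{(\theta_0)}$ and observed information $\Gamma_T=\frac{\phi_T^2}{\sigma^2}\int_0^T\omega_s^2\,\di s$, and with identically vanishing remainder. By Corollary~\ref{t:XWX2} the pair satisfies $(\Delta_T,\Gamma_T)\to(\sqrt{\Gamma}\,\cN,\Gamma)$ with $\Gamma=\mathcal S^{(\alpha/2)}/(2\sigma^2\theta_0)$, which is precisely the LAMN structure of Theorem~\ref{t:main}. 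As $\hat\theta_T$ is the exact maximiser, $(\hat\theta_T-\theta_0)/\phi_T=\Delta_T/\Gamma_T$ converges to the optimal mixed-normal limit $\Gamma^{-1/2}\cN$, and the convolution and local asymptotic minimax theorems for LAMN models \cite[Theorems 7.10 and 7.12]{hoepfner2014asymptotic} then show that no regular estimator concentrates more tightly and that $\hat\theta_T$ attains the minimax bound. I expect the only genuine subtlety to lie in this last step: one must check that our exactly-quadratic experiment meets the hypotheses of the abstract LAMN efficiency theorems and that $(\Delta_T,\Gamma_T)$ is indeed the correct central sequence and information, so that the exact identity $(\hat\theta_T-\theta_0)/\phi_T=\Delta_T/\Gamma_T$ identifies the MLE with the efficient centering; the derivation of the limit law itself is routine once the joint convergence of Corollary~\ref{t:XWX2} and the strict positivity of $\mathcal S^{(\alpha/2)}$ are in hand.
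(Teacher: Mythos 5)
Your proposal is correct and takes essentially the same route as the paper's proof, which simply notes that \eqref{e:MLE} follows from Proposition \ref{p:Lratio}, that the exactly linear--quadratic likelihood makes the MLE coincide with the central sequence (whence efficiency via the cited LAMN theorems), and that \eqref{e:AB} follows from Corollary \ref{t:XWX2}. Your extra checks --- strict concavity of the log-likelihood, invariance of $\hat\theta_T$ under the choice of reference point $\theta_0$, and the continuous-mapping step using $\mathcal S^{(\alpha/2)}>0$ a.s.\ together with the symmetry and independence of the Gaussian factor --- merely make explicit the details the paper leaves implicit.
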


\begin{rem}
\label{r:D}
It is instructive determine the tails of the random variable $\cN/\sqrt{\mathcal S^{(\alpha/2)}}$: for each $\alpha\in (0,2)$
\ba
\label{e:tail}
\limsup_{x\to+\infty} x^{-\alpha}\ln \P\Big(\frac{|\cN|}{\sqrt{\mathcal S^{(\alpha/2)}}}> x\Big)<0,
\ea
and in particular
all moments of the r.h.s.\ of \eqref{e:AB} are finite.
\end{rem}

\medskip

The proof of all the results formulated above will be given in Section \ref{s:proofs} after necessary preparations made in the next Section.

\section{Auxiliary results}

We decompose the L\'evy process $Z$ into a compound Poisson process with heavy jumps, and the rest. For definiteness, let for $\rho\geq 0$,
$R_T=T^\rho\colon [1,\infty)\to [1,\infty)$ be a non-decreasing function.

Denote
\ba
\eta^T_t&=\int_0^t \int_{|z|>R_T} zN(\di z,\di s),\\
\xi_t^T&=\sigma W_t   +\int_0^t \int_{|z|\leq R_T} z\tilde N(\di z,\di s),\\
b_T&=b+\int_{1< |z|\leq R_T} z\nu(\di z),\\
Z_t^T&=Z_t-\eta^T_t=  \xi^T_t + b_T t.
\ea
For each $T\geq  1$, the process $\eta^T$ is a compound Poisson process
with intensity $H(R_T)$, the iid jumps $\{J^T_k\}_{k\geq 1}$ occurring at arrival times $\{\tau_k^T\}_{k\geq 1}$, such that
\ba
&\P(|J_k^T|\geq z)=\frac{H(z)}{H(R_T)},\quad z\geq R_T,\\
&\P(\tau_{k+1}^T-\tau_k^T>u)=\ex^{-  H(R_T)u  },\quad u\geq 0.
\ea
Denote also by $N^T$ the Poisson counting process of $\eta^T$; it is a Poisson process with intensity  $H(R_T)$.

We decompose the Ornstein--Uhlenbeck process $X$ into a sum
\ba
\label{e:XX}
X_t&=X^T_t+X^{\eta^T}_t,\\
X_t^T&:= X_0\ex^{-\theta t}+ \int_0^t \ex^{-\theta(t-s)}\,\di Z^T_s,\\
X_t^{\eta^T}&:= \int_0^t \ex^{-\theta(t-s)}\,\di \eta^T_s.
\ea

Since $H(\cdot)\in\text{RV}_{-\alpha}$ and $\phi_\cdot\in\text{RV}_{-1/\alpha}$, $\alpha\in (0,2)$, by Potter's bounds 
(see, e.g.\ \cite[Proposition 2.6 (ii)]{resnick2007heavy})
for each $\e>0$ there are constants
$0<c_\e\leq C_\e<\infty$ such that for $u\geq 1$
\ba
\label{e:est-eps}
\frac{c_\e}{u^{\alpha+\e}}&\leq  H(u)\leq \frac{C_\e}{u^{\alpha-\e}},\\
\frac{c_\e}{u^{\frac{1}{\alpha}+\e}} &\leq  \phi_u\leq \frac{C_\e}{u^{\frac{1}{\alpha}-\e}}.
\ea
The following Lemma gives useful asymptotics of the truncated moments of the L\'evy measure $\nu$.
\begin{lem}
\label{l:moments}
1. For $\alpha\in (0,1]$ and any $\e>0$ there is $C(\e)>0$ such that
\ba
\label{e:int1}
&\int_{1<|z|\leq R} |z|\nu(\di z)\leq C(\e)R^{1-\alpha+\e}.
\ea
2. For $\alpha\in (1,2)$ there is $C>0$ such that
\ba
\label{e:int2}
&\int_{1<|z|\leq R} |z|\nu(\di z)\leq C.
\ea
3. For $\alpha\in (0,2)$ and any $\e>0$ there is $C(\e)>0$ such that
\ba
\label{e:int3}
&\int_{1<|z|\leq R} z^2 \nu(\di z)\leq C(\e)R^{2-\alpha+\e}.
\ea
\end{lem}
\begin{proof}
To prove the first inequality we integrate by parts and note that for any $\e>0$
\ba
\int_{1<|z|\leq R} |z|\nu(\di z) = - \int_{(1,R]} z\, \di H(z) 
&=- z H(z)\Big|_{1}^R  +   \int_{(1,R]} H(z)\,\di z\\
&\leq H(1) + C_\e \int_1^R \frac{\di z}{z^{\alpha-\e}}.
\ea
Hence \eqref{e:int1} follows for any $\e>0$ and \eqref{e:int2} is obtained if we choose $\e\in(0,\alpha -1)$.
The estimate \eqref{e:int3} is obtained analogously to  \eqref{e:int1}.
\end{proof}

The next Lemma will be used to determine the tail behaviour of the product of any two independent normalized 
jumps $|J_k^T||J_l^T|/R^2_T$, $k\neq l$.
\begin{lem}
\label{l:UV}
Let $U_R\geq 1$ and $V_R\geq 1$ be two independent random variables with the probability distribution function
\ba
\P(U_R>x)=\P(V_R>x)=\bar F_R(x)=\frac{H(xR)}{H(R)},\quad R\geq 1,\quad x\geq 1.
\ea
Then for each $\e\in (0,\alpha)$ there is $C(\e)>0$ such that
for all $R\geq 1$ and all $x\geq 1$
\ba
\P(U_RV_R> x)\leq \frac{C(\e)}{x^{\alpha-\e}}.
\ea
\end{lem}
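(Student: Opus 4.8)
The plan is to bound the tail of the product $U_RV_R$ by controlling a single fractional moment of $U_R$ uniformly in $R$ and then applying Markov's inequality; this device automatically absorbs the logarithmic correction $\P(U_RV_R>x)\asymp x^{-\alpha}\ln x$ expected for a product of two $\mathrm{RV}_{-\alpha}$ tails into the arbitrarily small exponent loss $\e$. The crucial preliminary step is a tail estimate for $\bar F_R$ that is \emph{uniform} in $R\geq 1$: for every $\delta\in(0,\alpha)$ there is $C_\delta$ with
\[
\bar F_R(s)=\frac{H(sR)}{H(R)}\leq C_\delta\, s^{-(\alpha-\delta)},\qquad R\geq 1,\ s\geq 1.
\]
Here the naive combination of the two-sided bounds \eqref{e:est-eps} fails, because estimating $H(sR)$ from above and $H(R)$ from below leaves a spurious factor $R^{2\delta}$ which blows up as $R\to\infty$. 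Instead I would invoke Potter's bound in its ratio form (\cite[Proposition 2.6]{resnick2007heavy}): there exist $R_0\geq 1$ and $C$ such that $H(sR)/H(R)\leq C\,s^{-\alpha+\delta}$ for all $R\geq R_0$ and $s\geq 1$. For the remaining compact range $1\leq R\leq R_0$ I would use that the tail $H$ is non-increasing: since $sR\geq s$ we have $H(sR)\leq H(s)$, while $H(R)\geq H(R_0)>0$, so that $\bar F_R(s)\leq H(s)/H(R_0)\leq (C_\delta/H(R_0))\,s^{-(\alpha-\delta)}$ by \eqref{e:est-eps}. Combining the two ranges gives the displayed uniform bound.

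Next I would convert this into a uniform bound on a fractional moment. Given the target $\e\in(0,\alpha)$, set $p:=\alpha-\e$ and choose $\delta\in(0,\e)$, so that $p<\alpha-\delta$. Since $U_R\geq 1$,
\[
\E\big[U_R^{\,p}\big]=\int_0^\infty p\,s^{p-1}\,\P(U_R>s)\,\di s
= 1+\int_1^\infty p\,s^{p-1}\,\bar F_R(s)\,\di s
\leq 1+p\,C_\delta\int_1^\infty s^{\,\delta-\e-1}\,\di s=:M<\infty,
\]
where the integral converges precisely because $\delta-\e-1<-1$, and $M$ depends only on $\e$ and $\delta$, not on $R$. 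The identical bound holds for $V_R$.

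Finally, since $U_R$ and $V_R$ are independent, Markov's inequality applied to $(U_RV_R)^{p}$ yields, for all $x\geq 1$ and $R\geq 1$,
\[
\P(U_RV_R>x)\leq x^{-p}\,\E\big[(U_RV_R)^{p}\big]
=x^{-(\alpha-\e)}\,\E\big[U_R^{\,p}\big]\,\E\big[V_R^{\,p}\big]\leq M^2\,x^{-(\alpha-\e)},
\]
which is exactly the assertion with $C(\e)=M^2$. I expect the only genuinely delicate point to be the uniform-in-$R$ tail estimate of the first step, where the ratio form of Potter's bound together with the monotonicity of $H$ on the compact range $[1,R_0]$ is essential; the fractional-moment and Markov steps are then entirely routine.
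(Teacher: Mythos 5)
Your proof is correct, but it takes a genuinely different route from the paper's for the main step. The paper works directly with the tail of the product: it writes $\P(U_RV_R>x)=\int_1^\infty \bar F_R(x/u)\,\di F_R(u)$, splits the integral at $u=x$, and integrates by parts, which produces a factor $\ln x$ that is then absorbed into a slightly worse power $x^{-\alpha+2\e}$. You instead bound a fractional moment $\E[U_R^{p}]$ with $p=\alpha-\e$, uniformly in $R$, and conclude by independence and Markov's inequality applied to $(U_RV_R)^p$; your exponent bookkeeping ($\delta\in(0,\e)$ so that $\int_1^\infty s^{\delta-\e-1}\,\di s<\infty$) is correct, and this route is shorter, avoids the integration by parts entirely, and would generalize immediately to products of any fixed number of factors, at the cost of hiding the true second-order behaviour $x^{-\alpha}\ln x$ that the paper's computation makes visible before discarding it. Both arguments rest on the same uniform-in-$R$ Potter-type estimate $\bar F_R(s)\leq C\,s^{-\alpha+\delta}$ for $R\geq 1$, $s\geq 1$; here you are in fact more careful than the paper, which cites \cite[Proposition 2.6 (ii)]{resnick2007heavy} as if it held for all $R\geq 1$, whereas Potter's bound is only guaranteed for $R\geq R_0$. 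Your patch for the compact range $1\leq R\leq R_0$ --- monotonicity $H(sR)\leq H(s)$ together with $H(R)\geq H(R_0)>0$ and the one-variable bound \eqref{e:est-eps} --- is exactly the right fix, and your observation that naively combining the two-sided bounds \eqref{e:est-eps} leaves a spurious factor $R^{2\delta}$ correctly identifies why the ratio form is needed.
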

\begin{proof}
Recall that Potter's bounds \cite[Proposition 2.6 (ii)]{resnick2007heavy} imply that for each $\e>0$ there is $C_0(\e)>0$ such that 
for each $x\geq 1$ and $R\geq 1$
\ba
\bar F_R(x)=\frac{H(xR)}{H(R)}\leq \frac{C_0(\e)}{x^{\alpha-\e}}.
\ea
Moreover,
\ba
\bar F_R (x)&\equiv 1,\quad x\in[0,1].
\ea
For $x> 1$ we write
\ba
\P(U_RV_R>x)&=\int_1^\infty \int_{x/u}^\infty \di F_R(v)\,\di F_R(u)\\
&=\Big(\int_1^{x}+\int_{x}^\infty\Big) \bar F_R(x/u)\,\di F_R(u)=I^{(1)}_{R}(x)+I^{(2)}_{R}(x) .
\ea
Then
\ba
I^{(2)}_{R}(x)&= \int_{x}^\infty\bar F_R(x/u)\,\di F_R(u)\leq \int_{x}^\infty \di F_R(u)
\leq \bar F_R(x)\leq \frac{C_0(\e)}{x^{\alpha-\e}}.
\ea
Eventually,
\ba
I^{(1)}_{R}(x)&\leq  \frac{C_0(\e)}{x^{\alpha-\e}}   \int_1^{x}  u^{\alpha-\e} \,\di  F_R(u)
=- \frac{C_0(\e)}{x^{\alpha-\e}}   \int_1^{x}  u^{\alpha-\e}     \,\di \bar F_R(u)\\
&=- \frac{C_0(\e)}{x^{\alpha-\e}} u^{\alpha-\e} \bar F_R(u)\Big|_1^{x}
+    (\alpha-\e)\frac{C_0(\e)}{x^{\alpha-\e}} \int_1^{x}  u^{\alpha-1-\e}  \bar F_R(u)    \,\di u\\
&\leq \frac{C_0(\e)}{x^{\alpha-\e}}
+ (\alpha-\e)\frac{C_0(\e)^2}{x^{\alpha-\e}} \int_1^{x}   \frac{u^{\alpha-1-\e}}{u^{\alpha-\e}}    \,\di u\\
&\leq \frac{C_0(\e)}{x^{\alpha-\e}}
+ (\alpha-\e)\frac{C_0(\e)^2}{x^{\alpha-\e}} \ln x\\
&\leq \frac{C(\e)}{x^{\alpha-2\e}}
\ea 
for some $C(\e)>0$. 
\end{proof}

\begin{rem}
A finer tail asymptotics of products of iid non-negative Pareto type random variables can be found in 
\cite[Theorem 2.1]{rosinski1987multilinear} and \cite[Lemma 4.1 (4)]{jessen2006regularly}. In Lemma \ref{l:UV}, however, we establish 
rather rough estimates which are valid for the families of iid random variables $\{U_R,V_R\}_{R\geq 1}$.
\end{rem}

The following useful Lemma will be used to determine 
the conditional distribution of the interarrival times of the compound Poisson process $\eta^T$. 
\begin{lem}
Let $T>0$ and
let $N=(N_t)_{t\in[0,T]}$ be a Poisson process, $\{\tau_k\}_{k\geq 1}$ be its arrival
times, $\tau_0=0$. Then for each $m\geq 1$, and $1\leq j < j+k\leq m$
\ba
\label{e:beta}
\P(\tau_{j+k}-\tau_j\leq s| N_T=m)=\P\Big(\sigma_k\leq \frac{s}{T}  \Big),\quad s\in[0,1],
\ea
where 
$\sigma_k$ is a $\operatorname{Beta}(m,k-1)$-distributed random variable with the density
\ba
\label{e:betadensity}
f_{\sigma_k}^{(m)}(u)=\frac{m!}{(k-1)!(m-k)!}u^{k-1}(1-u)^{m-k},\quad u\in[0,1],\quad m\geq 1,\ 1\leq k\leq m.
\ea
\end{lem}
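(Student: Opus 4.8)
The plan is to reduce the assertion to a classical distributional fact about the order statistics of i.i.d.\ uniform random variables, and then to identify the relevant spacing as a Beta law.

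First I would invoke the well-known conditional structure of the Poisson process $N$: given $\{N_T=m\}$, the ordered arrival times $(\tau_1,\dots,\tau_m)$ have the same joint distribution as the order statistics $(U_{(1)},\dots,U_{(m)})$ of $m$ i.i.d.\ random variables uniformly distributed on $[0,T]$. Granting this, conditionally on $\{N_T=m\}$ the increment $\tau_{j+k}-\tau_j$ has the law of $U_{(j+k)}-U_{(j)}$. Writing $U_{(i)}=T\,V_{(i)}$, where $(V_{(i)})_{i=1}^m$ are the order statistics of $m$ i.i.d.\ uniforms on $[0,1]$, I obtain for every $s\ge 0$
\ba
\P\big(\tau_{j+k}-\tau_j\le s\,\big|\,N_T=m\big)=\P\big(V_{(j+k)}-V_{(j)}\le s/T\big),
\ea
so that the problem is reduced to determining the law of the spacing $V_{(j+k)}-V_{(j)}$.

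Next I would identify this spacing through the Gamma representation of uniform order statistics. Let $E_1,\dots,E_{m+1}$ be i.i.d.\ standard exponential random variables and set $S_i=E_1+\cdots+E_i$; then $(S_1/S_{m+1},\dots,S_m/S_{m+1})$ has the law of $(V_{(1)},\dots,V_{(m)})$. Consequently
\ba
V_{(j+k)}-V_{(j)}=\frac{S_{j+k}-S_j}{S_{m+1}}=\frac{A}{A+B},
\ea
where $A=E_{j+1}+\cdots+E_{j+k}$ is $\operatorname{Gamma}(k,1)$-distributed and $B$, the sum of the remaining $m+1-k$ exponentials, is $\operatorname{Gamma}(m+1-k,1)$-distributed and independent of $A$. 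Since the ratio $A/(A+B)$ of two independent Gamma variables with unit scale is $\operatorname{Beta}(k,m+1-k)$, its density is exactly $f^{(m)}_{\sigma_k}$ from \eqref{e:betadensity}; in particular it depends only on $k$ and $m$, not on $j$. Taking $\sigma_k$ to be this Beta variable, the displayed identity becomes $\P(\tau_{j+k}-\tau_j\le s\mid N_T=m)=\P(\sigma_k\le s/T)$, valid for all $s\ge 0$ and in particular for $s\in[0,T]$, which is the assertion (for $s\ge T$ both sides equal $1$, since conditionally all arrivals lie in $[0,T]$ and hence $\tau_{j+k}-\tau_j\le T$ almost surely).

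The only genuine content is the identification of the spacing law, and the step I would be most careful about is the Gamma representation, equivalently the Dirichlet law of the normalized spacings $(E_1,\dots,E_{m+1})/S_{m+1}$ on the simplex. If one prefers to avoid it, the same $\operatorname{Beta}(k,m-k+1)$ law can be obtained by integrating the joint density $m!\,\bI(0<v_1<\cdots<v_m<1)$ of the order statistics directly, which is elementary but more tedious. I note finally that the parameter label ``$\operatorname{Beta}(m,k-1)$'' in the statement appears to be a misprint: the density \eqref{e:betadensity} is that of a $\operatorname{Beta}(k,m-k+1)$ law, which is precisely what the argument produces.
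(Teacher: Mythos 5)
Your proof is correct, and after the shared first step it takes a genuinely different route from the paper's. Both arguments begin identically, by invoking the classical fact that conditionally on $\{N_T=m\}$ the arrival times $(\tau_1,\dots,\tau_m)$ are distributed as the order statistics of $m$ i.i.d.\ uniform variables on $[0,T]$ (the paper cites \cite[Proposition 3.4]{Sato-99} for this). From there the paper computes with densities: it quotes the joint density of the pair of order statistics $(\tau_j,\tau_{j+k})$ from \cite[Chapter 11.10]{BalNev-03}, changes variables to $(\tau_{j+k}-\tau_j,\tau_j)$, and integrates out the second coordinate via the substitution $v=(1-u)z$ and the Beta integral, which produces the density \eqref{e:betadensity} directly. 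You instead use the R\'enyi (exponential-spacings/Dirichlet) representation $V_{(i)}=S_i/S_{m+1}$ of uniform order statistics, under which the spacing $V_{(j+k)}-V_{(j)}$ becomes the ratio $A/(A+B)$ of independent $\operatorname{Gamma}(k,1)$ and $\operatorname{Gamma}(m+1-k,1)$ variables, hence $\operatorname{Beta}(k,m+1-k)$; your bookkeeping of the exponentials is right ($A$ collects $E_{j+1},\dots,E_{j+k}$ and $B$ the remaining $m+1-k$ summands, so independence holds). Your approach buys two things: it avoids all integration, and it makes structurally transparent from the outset what the paper's computation reveals only at the end, namely that the spacing law depends on $k$ and $m$ but not on $j$; the cost is reliance on the Dirichlet law of normalized spacings, a standard fact of essentially the same depth as the order-statistics density formula the paper quotes, and you sensibly note the elementary fallback of integrating $m!\,\bI(0<v_1<\cdots<v_m<1)$ directly. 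Your treatment of the range of $s$ (both sides equal $1$ for $s\geq T$, since conditionally $\tau_{j+k}-\tau_j\leq T$ almost surely) is also fine and in fact patches the statement's slip ``$s\in[0,1]$'' where $s\in[0,T]$ is meant. Finally, you are right that the label ``$\operatorname{Beta}(m,k-1)$'' in the lemma is a misprint: the density \eqref{e:betadensity} is that of a $\operatorname{Beta}(k,m-k+1)$ law, which is exactly what both proofs deliver, so nothing downstream is affected.
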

\begin{proof}
It is well known that the conditional distribution of the arrival times $\tau_1 ,\dots , \tau_m$,
given that $N_T = m$, coincides with the distribution of the order statistics obtained from
$m$ samples from the population with uniform distribution on $[0, T]$, see \cite[Proposition 3.4]{Sato-99}.

Let for brevity $T=1$. The joint density of $(\tau_j,\tau_{j+k})$, $1\leq j<j+k\leq m$ is well known, see e.g.\ 
\cite[Chapter 11.10]{BalNev-03}:
\ba
f_{\tau_j,\tau_{j+k}}^{(m)}(u,v)&=c_{j,k,m} \cdot u^{j-1}(v-u)^{k-1}(1-v)^{m-j-k}\bI(0\leq u<v\leq 1),\\
c_{j,k,m}&=\frac{m!}{(j-1)!(k-1)!(m-j-k)!},
\ea
and consequently
\ba
f^{(m)}_{\tau_{j+k}-\tau_{j}, \tau_{j} }(u,v)=c_{j,k,m}\cdot v^{j-1}u^{k-1}(1-u-v)^{m-j-k},\quad u,v,u+v\in[0, 1].
\ea
Hence, the probability density of the difference $\tau_{j+k}-\tau_{j}$ is obtained by integration w.r.t.\ $v\in[0,1]$,
\ba
f^{(m)}_{\tau_{j+k}-\tau_{j}}(u)&=c_{k,j,m}\cdot u^{k-1}\int_0^{1-u} v^{j-1}(1-u-v)^{m-j-k}\,\di v \\ 
&\stackrel{v=(1-u)z}{=}c_{j,k,m}\cdot u^{k-1} \cdot (1-u)^{m-k}  \int_0^1 z^{j-1}(1-z)^{m-j-k}\,\di z.
\ea
Recalling the definition of the Beta-function, we get
\ba
\int_0^1 z^{j-1}(1-z)^{m-j-k}\,\di z=\frac{(j-1)!(m-j-k)!}{(m-k)!},
\ea
which yields the desired result.
\end{proof}

\begin{lem}
\label{l:Slaplace}
Let \emph{\textbf{A}}$_\nu$ hold true and $\{\phi_T\}$ be the scaling defined in \eqref{e:phi}. Then for any 
$\rho\in[0,\frac{1}{\alpha})$ 
\ba
\phi_T^2 [\eta^T]_T\stackrel{\di}{\to} \mathcal S^{(\alpha/2)},\quad T\to \infty,
\ea
where $\mathcal S^{(\alpha/2)}$ is a spectrally positive $\alpha/2$-stable random variable with the Laplace transform \eqref{e:LT}.
\end{lem}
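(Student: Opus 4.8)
The plan is to exploit that $\eta^T$ is a finite-activity pure-jump process, so its quadratic variation on $[0,T]$ is simply the sum of its squared jumps,
\[
[\eta^T]_T=\sum_{s\le T}(\Delta\eta^T_s)^2=\sum_{k=1}^{N^T_T}(J^T_k)^2,
\]
a compound Poisson sum whose number of summands $N^T_T$ is Poisson with mean $TH(R_T)$ and whose summands are i.i.d.\ copies of $(J^T_1)^2$. Since every term is nonnegative, I would work entirely with Laplace transforms and invoke the continuity theorem at the end. By the compound Poisson formula,
\[
\E\,\ex^{-\la\phi_T^2[\eta^T]_T}=\exp\Big(-TH(R_T)\big(1-\E\,\ex^{-\la\phi_T^2(J^T_1)^2}\big)\Big),\qquad \la\ge 0,
\]
so the whole statement reduces to showing that the exponent converges, namely $\psi_T(\la):=TH(R_T)\big(1-\E\,\ex^{-\la\phi_T^2(J^T_1)^2}\big)\to\Gamma(1-\tfrac{\al}{2})\la^{\al/2}$.

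For this I would use the identity $1-\ex^{-\la W}=\la\int_0^\infty\ex^{-\la w}\bI(W>w)\,\di w$ together with Fubini to write
\[
\psi_T(\la)=\la\int_0^\infty\ex^{-\la w}\,TH(R_T)\,\P\big(\phi_T^2(J^T_1)^2>w\big)\,\di w.
\]
The tail factor is explicit: from $\P(|J^T_1|\ge z)=H(z)/H(R_T)$ for $z\ge R_T$ one gets $TH(R_T)\,\P(\phi_T^2(J^T_1)^2>w)=TH(\sqrt w/\phi_T)$ for $w>(\phi_T R_T)^2$ (and $=TH(R_T)$ on the atom region $w\le(\phi_T R_T)^2$, where the tail is $1$); note that the troublesome factor $H(R_T)$ cancels. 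I would then factor $TH(\sqrt w/\phi_T)=\big(TH(1/\phi_T)\big)\cdot H(\sqrt w/\phi_T)/H(1/\phi_T)$ and use that the definition $1/\phi_T=\tilde H^{-1}(1/T)$ from \eqref{e:phi} gives $T\tilde H(1/\phi_T)=1$ exactly, so $TH(1/\phi_T)\to 1$ since $\tilde H\sim H$; combined with $H\in\mathrm{RV}_{-\al}$ and $1/\phi_T\to\infty$ this yields the pointwise limit $TH(\sqrt w/\phi_T)\to w^{-\al/2}$ for every $w>0$. Passing to the limit inside the integral would give
\[
\psi_T(\la)\to\la\int_0^\infty\ex^{-\la w}\,w^{-\al/2}\,\di w=\Gamma\big(1-\tfrac{\al}{2}\big)\la^{\al/2},
\]
which is exactly the Laplace exponent of \eqref{e:LT}; the continuity theorem for Laplace transforms (the limit is continuous at $\la=0$) then finishes the proof. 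Equivalently, this is the statement that the Lévy measures of the compound-Poisson laws converge to the $\al/2$-stable Lévy measure $\frac{\al}{2}\,w^{-1-\al/2}\di w$.

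The main work, and where I expect the difficulty, is justifying the interchange of limit and integral. Near $w=\infty$ the factor $\ex^{-\la w}$ makes domination harmless for $\la>0$, but near $w=0$ one must control the diverging prefactor $TH(R_T)\to\infty$, which I would do in two pieces. On the atom region $w\le(\phi_T R_T)^2$ the contribution is at most $\la(\phi_T R_T)^2\,TH(R_T)$, and a regular-variation bookkeeping using $\phi_\cdot\in\mathrm{RV}_{-1/\al}$ and $H\in\mathrm{RV}_{-\al}$ shows this is of order $T^{\,1-2/\al+\rho(2-\al)}$ up to slowly varying factors, whose exponent is strictly negative precisely because $\rho<\tfrac1\al$; this is exactly where the hypothesis on $\rho$ enters. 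On the remaining region I would dominate $TH(\sqrt w/\phi_T)$ by bounding the factor $TH(1/\phi_T)$ by a constant and applying the two-sided (ratio) Potter bound to $H(\sqrt w/\phi_T)/H(1/\phi_T)$, obtaining $\le C_\e\,w^{-(\al-\e)/2}$ for $w\ge1$ and $\le C_\e\,w^{-(\al+\e)/2}$ for $w<1$; the resulting dominating function $\ex^{-\la w}$ times this bound is integrable on $(0,\infty)$ once $\e$ is chosen so that $\al+\e<2$, which is possible since $\al<2$. A point worth flagging is that the one-sided estimates \eqref{e:est-eps} are too lossy here, as they would produce the divergent factor $T\phi_T^{\al-\e}\to\infty$; it is essential to exploit the sharp normalization $TH(1/\phi_T)\to1$ built into the definition of $\phi_T$.
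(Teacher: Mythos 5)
Your proof is correct, and at the skeleton level it is the paper's proof: both compute the Laplace transform of the compound Poisson variable $\phi_T^2[\eta^T]_T$ and reduce the lemma to showing $\la T\int_0^\infty \ex^{-\la u}\,H\big(\tfrac{\sqrt{u}}{\phi_T}\vee R_T\big)\,\di u\to\la\int_0^\infty \ex^{-\la u}u^{-\al/2}\,\di u=\Gamma(1-\tfrac{\al}{2})\la^{\al/2}$; your layer-cake identity yields exactly the integrated-by-parts cumulant \eqref{e:K} of the paper (note $TH(R_T)\P(\phi_T^2(J_1^T)^2>w)=TH(\tfrac{\sqrt{w}}{\phi_T}\vee R_T)$ for a.e.\ $w$), and both arguments hinge on the exact normalization $T\tilde H(1/\phi_T)=1$ together with $\tilde H\sim H$. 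Where you genuinely diverge is the justification of the limit--integral interchange. The paper splits at a fixed $u_0>0$: on $[u_0,\infty)$ it applies the uniform convergence theorem for regularly varying functions \cite[Proposition 2.4]{resnick2007heavy} — this is where $\rho<\tfrac{1}{\al}$ enters there, since $\phi_T R_T\to0$ makes the truncation $\vee R_T$ disappear on that range — while on $(0,u_0)$ it uses Karamata's theorem to get a bound of order $u_0^{1-\al/2}$, removed by letting $u_0\downarrow0$ at the very end. You instead run a single dominated-convergence argument, with two-sided Potter ratio bounds as dominating function and a separate estimate of the atom region $w\le(\phi_T R_T)^2$, which is where $\rho<\tfrac{1}{\al}$ enters for you; your exponent bookkeeping $1-\tfrac{2}{\al}+\rho(2-\al)<0$ is correct and vanishes exactly at $\rho=\tfrac1\al$. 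Both routes are sound; yours is somewhat more self-contained (no uniform convergence theorem, no auxiliary $u_0$-limit) and makes the role of the truncation exponent $\rho$ fully explicit, whereas the paper's version never needs two-sided ratio bounds near the cutoff. One small point to patch in your version: Potter's two-sided bound for $H(\sqrt{w}/\phi_T)/H(1/\phi_T)$ is a priori valid only when both arguments exceed some threshold $t_0(\e)$, and since $\sqrt{w}/\phi_T$ can sit just above $R_T$, with $R_T\equiv1$ when $\rho=0$, you should either invoke the adjusted-constants form valid for all arguments $\ge 1$ — precisely the uniform bound the paper itself uses in the proof of Lemma \ref{l:UV} — or absorb the strip $w\in[(\phi_T R_T)^2,\,t_0^2\phi_T^2]$ into your atom-region estimate, which costs nothing because $T\phi_T^2\to0$ for $\al\in(0,2)$.
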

\begin{proof}
The process $t\mapsto \phi_T^2 [\eta^T]_t$ is a compound Poisson process with the L\'evy measure $\nu_T$ with the tail
\ba
H_T(u)=\int_u^\infty \nu_T(\di z)= H\Big(\frac{\sqrt u}{\phi_T}\vee R_T\Big) ,\quad u>0.
\ea
The Laplace transform of $\phi_T^2 [\eta^T]_T$ has the cumulant
\ba
K_T(\lambda):=\ln \E \ex^{- \lambda \phi_T^2 [\eta^T]_T}
&=-T\int_0^\infty \Big(\ex^{-\lambda u}-1\Big)\,\di H_T(u),\quad \lambda\geq 0.
\ea
Integrating by parts yields
\ba
\label{e:K}
K_T(\lambda)=-T\big(\ex^{-\lambda u}-1\big) H_T(u)\Big|_{0}^\infty 
-\lambda T \int_0^\infty \ex^{-\lambda u}H_T(u)\,\di u.
\ea
Since the first summands on the r.h.s.\ of \eqref{e:K} vanish, it is left to evaluate the integral term.
Taking into account \eqref{e:phi}, namely that $\frac{1}{T}=\tilde H(\frac{1}{\phi_T})$, we write for any $u_0>0$
\ba
K_T(\lambda)&=
-\lambda T\int_0^\infty  \ex^{-\lambda u} H_T(u)\,\di u\\
&=-\lambda T \int_{0}^{u_0} \ex^{-\lambda u}  H\Big(\frac{\sqrt u}{\phi_T}\vee R_T\Big)    \,\di u
-\lambda
\frac{H(1/\phi_T)}{\tilde H(1/\phi_T)}\frac{1}{H(1/\phi_T)}
\int_{u_0}^\infty \ex^{-\lambda u}  H\Big(\frac{\sqrt u}{\phi_T}\vee R_T\Big)    \,\di u\\
&=-I_T^{(1)}(\lambda)-I_T^{(2)}(\lambda).
\ea
It is evident that $\lim_{T\to\infty}\frac{H(1/\phi_T)}{\tilde H(1/\phi_T)}=1$. Moreover for $\rho\in[0,1/\alpha)$
due to \cite[Proposition 2.4]{resnick2007heavy},
the convergence 
\ba
\lim_{T\to\infty}  \frac{H\big(\frac{\sqrt u}{\phi_T}\vee R_T\big)}{ H\big(\frac{1}{\phi_T}\big)  }  =   \frac{1}{u^{\alpha/2}}
\ea
holds uniformly on each half-line $[u_0,\infty)$, $u_0>0$, and thus for each $u_0>0$
\ba 
\lim_{T\to\infty} I_T^{(2)}( \lambda)=\lambda \int_{u_0}^\infty \frac{\ex^{-\lambda u}  }{u^{\alpha/2}} \, \di u.
\ea
Further we estimate
\ba
I^{(1)}_T(\lambda)
\leq 2\lambda  T \phi_T^2 \int_{0}^{\sqrt{u_0}/\phi_T}  y H (y)\, \di y.
\ea
Note that $y\mapsto yH(y)$ is integrable at $0$ by the definition of the L\'evy measure, $0\leq -\int_0^1 y^2\di H(y)<\infty$, 
and the integration by parts.
Eventually by Karamata's theorem \cite[Theorem 2.1 (a)]{resnick2007heavy}
\ba
I_T^{(1)}\leq 2 \lambda \frac{H(1/\phi_T)}{\tilde H(1/\phi_T)}\cdot \frac{\phi_T^2 }{H(1/\phi_T)}\cdot 
\frac{\int_{0}^{\sqrt{u_0}/\phi_T}  y H (y)\, \di y}{ \frac{u_0}{\phi_T^2} H(\frac{\sqrt{u_0}}{\phi_T})  } 
\cdot \frac{u_0}{\phi_T^2}\cdot H\Big(\frac{\sqrt{u_0}}{\phi_T}\Big) \to \frac{2\lambda }{2-\alpha}u_0^{1-\frac{\alpha}{2}},\quad T\to\infty.
\ea
Hence choosing $u_0>0$ sufficiently small and letting $T\to\infty$ we obtain convergence of $K_T$ to the cumulant of a
spectrally positive stable random variable
\ba
\lim_{T\to\infty} K_T(\lambda)= -\lambda \int_{0}^\infty \frac{\ex^{-\lambda u}  }{u^{\alpha/2}} \, \di u= 
-\Gamma\Big(1-\frac{\alpha}{2}\Big)\lambda^{\alpha/2}.
\ea
\end{proof}

\begin{lem}
\label{l:OUconv}
For any $\rho\in [0,1/\alpha)$ and any $\theta>0$
\begin{align}
\label{e:XT}
& \phi_T^2 |X^{T}_T|^2 \stackrel{\di}{\to } 0,\quad T\to\infty,\\
& \phi_T^2 \int_0^T |X^{T}_s|^2\,\di s \stackrel{\di}{\to } 0,\quad T\to\infty.
\end{align}
\end{lem}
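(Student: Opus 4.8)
The plan is to reduce both convergences to $L^1$- and almost-sure estimates, exploiting that the limit is the constant $0$: convergence in distribution to a constant is the same as convergence in probability, so it suffices to bound expectations and deterministic quantities by the moment asymptotics from Lemma~\ref{l:moments} together with the Potter bounds \eqref{e:est-eps}. First I would split the driving process as $Z^T_s=\xi^T_s+b_Ts$ and correspondingly write, for every $s\ge 0$,
\[
X^T_s=X_0\ex^{-\theta s}+M_s+D_s,\qquad M_s:=\int_0^s\ex^{-\theta(s-r)}\,\di\xi^T_r,\quad D_s:=b_T\,\frac{1-\ex^{-\theta s}}{\theta},
\]
where $M$ is a square-integrable martingale (since $\xi^T$ is) and $D$ is deterministic. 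Using $(X^T_s)^2\le 3\big(X_0^2\ex^{-2\theta s}+M_s^2+D_s^2\big)$, I treat the three contributions separately.

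For the martingale part I would use that $\xi^T$ has angle bracket $\di\langle\xi^T\rangle_r=(\sigma^2+V_T)\,\di r$ with $V_T:=\int_{|z|\le R_T}z^2\,\nu(\di z)$, so by the It\^o isometry $\E M_s^2=(\sigma^2+V_T)\tfrac{1-\ex^{-2\theta s}}{2\theta}\le\tfrac{\sigma^2+V_T}{2\theta}$. Lemma~\ref{l:moments}(3) gives $V_T\le C(\e)R_T^{2-\alpha+\e}=C(\e)T^{\rho(2-\alpha+\e)}$, while Lemma~\ref{l:moments}(1)--(2) bound $|b_T|$, hence $D_s$, by a constant for $\alpha\in(1,2)$ and by $C(\e)T^{\rho(1-\alpha+\e)}$ for $\alpha\in(0,1]$. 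The $X_0$-terms carry only the deterministic factor $\ex^{-2\theta s}$ and are therefore killed by $\phi_T^2\to0$ almost surely, with no moment assumption on $X_0$ needed.

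The first assertion follows by taking $s=T$: from $\phi_T^2\E M_T^2\le C\phi_T^2(\sigma^2+V_T)$, the deterministic bound on $\phi_T^2 D_T^2\le C\phi_T^2 b_T^2$, and $\phi_T^2\le C_\e T^{-2/\alpha+2\e}$, every exponent is strictly negative for $\rho<1/\alpha$ (the dominant term has exponent $-2/\alpha+\rho(2-\alpha)+\e'<-1+\e'$, using $\rho(2-\alpha)<2/\alpha-1$). For the second assertion I would estimate $\phi_T^2\int_0^T(X^T_s)^2\,\di s$ in the same way, the only new feature being the factor $T$ from integration: the martingale term obeys $\phi_T^2\E\int_0^T M_s^2\,\di s\le\phi_T^2(\sigma^2+V_T)\tfrac{T}{2\theta}$, the drift term $\phi_T^2\int_0^T D_s^2\,\di s\le\phi_T^2 b_T^2\tfrac{T}{\theta^2}$, and the $X_0$-term $\le\phi_T^2 X_0^2/(2\theta)\to0$ almost surely. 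Convergence in probability of each summand, hence of the sum, gives the claim.

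The delicate point, and the only place where the precise range $\rho\in[0,1/\alpha)$ is used, is the exponent bookkeeping for the time-integrated statement: the extra factor $T$ consumes exactly the available slack, since $\phi_T^2 V_T T$ has exponent $-2/\alpha+1+\rho(2-\alpha)+\e'$, which is negative precisely because $\rho<1/\alpha$ yields $\rho(2-\alpha)<2/\alpha-1$. One must therefore first fix $\rho$, read off the strictly positive gap $2/\alpha-1-\rho(2-\alpha)>0$, and only then choose $\e$ small enough that $\e'$ stays below this gap. The analogous but milder computation handles the drift term and, for $\alpha\le1$, its $T^{\rho(1-\alpha)}$ growth, where $\rho(1-\alpha)<1/\alpha-1<1/\alpha-\tfrac12$ again provides the required margin.
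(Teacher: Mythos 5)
Your proposal is correct and follows essentially the same route as the paper: the identical three-term decomposition into initial-value, compensated-jump-plus-Brownian, and drift parts, the It\^o isometry combined with Lemma~\ref{l:moments} and the Potter bounds \eqref{e:est-eps}, and the same exponent bookkeeping in which the extra factor $T$ from integration is absorbed by choosing $\e$ small after fixing $\rho<1/\alpha$. The only (harmless) slip is calling $s\mapsto M_s$ a square-integrable martingale --- it is an Ornstein--Uhlenbeck-type process, not a martingale in $s$ --- but since you only apply the isometry at each fixed $s$, the computation stands.
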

\begin{proof}
\ba
|X^{T}_s|^2\leq 3|X_0|^2\ex^{-2\theta s}+ 3\ex^{-2\theta s}\Big|\int_0^s \ex^{\theta r}\,\di \xi^T_r\Big|^2
+3 b_T^2 \ex^{-2\theta s}\Big|\int_0^s \ex^{\theta r}\,\di r\Big|^2= a_1(s)+a_2(s)+a_3(s).
\ea
By the It\^o isometry and Lemma \ref{l:moments}, for any $\e>0$ we estimate for each $s\geq 0$
\ba
\label{e:xi}
\phi_T^2 \E a_2(s) 
&=\phi_T^2 \cdot  \frac{3}{2\theta}\Big(\sigma^2 + \int_{|z|\leq R_T} z^2\nu(\di z)\Big) \cdot \ex^{-2\theta s} 
  (\ex^{2\theta s}-1) 
\leq C_1 \cdot  T^{-\frac{2}{\alpha}+\e +\rho(2-\alpha+\e)} .
\ea 
Analogously, Lemma \ref{l:moments} yields
\ba
b_T^2  
 \leq \begin{cases}
 \displaystyle C_2 T^{ 2\rho(1-\alpha+\e )}  , \quad  \alpha\in(0,1],\\
 \displaystyle C_2   ,\quad    \alpha\in (1,2).
     \end{cases}
\ea
and hence for each $s\geq 0$ 
\ba
\label{e:b}
\phi_T^2 \cdot a_3(s) \leq  C_3 \max\{1, T^{ 2\rho(1-\alpha+\e )}\}\cdot T^{-\frac{2}{\alpha}+\e}\to 0.
\ea
Finally, for $s\geq 0$
\ba
\label{e:X0}
\phi_T^2 a_1(s)\leq C_4 |X_0|^2 \cdot  T^{-\frac{2}{\alpha}+\e}\to 0\quad \text{a.s.\ as }\ T\to+\infty.
\ea
For any $\rho\in[0,1/\alpha)$ we can choose $\e>0$ sufficiently small such that the bounds in \eqref{e:xi} and \eqref{e:b} and \eqref{e:X0} 
converge to 0 as $T\to\infty$ which gives \eqref{e:XT}.
Integrating these inequalities w.r.t.\ $s\in[0,T]$ results in an additional factor $T$ on the r.h.s.\ of these estimates, and convergence
to $0$ still holds true for $\e>0$ sufficiently small. 
\end{proof}

\begin{lem}
\label{l:Xeta}
For any $\rho>\frac{1}{2\alpha}$ and any $\theta>0$
\ba
\phi_T^2 \int_0^T X^{\eta^T}_{s-}\,\di \eta^T_s\stackrel{\di}{\to} 0,\quad T\to \infty. 
\ea
\end{lem}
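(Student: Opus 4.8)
The plan is to turn the stochastic integral into an explicit double sum over \emph{pairs} of heavy jumps and to control it by a fractional absolute moment. Since $\eta^T$ is compound Poisson, the driven part is $X^{\eta^T}_t=\sum_{\tau_k^T\le t}\ex^{-\theta(t-\tau_k^T)}J_k^T$, with left limits $X^{\eta^T}_{s-}=\sum_{\tau_l^T<s}\ex^{-\theta(s-\tau_l^T)}J_l^T$; integrating against the jumps of $\eta^T$ gives
\ba
\int_0^T X^{\eta^T}_{s-}\,\di\eta^T_s=\sum_{0<\tau_l^T<\tau_k^T\le T} J_l^T J_k^T\,\ex^{-\theta(\tau_k^T-\tau_l^T)}.
\ea
The decisive structural feature is that every summand carries a \emph{product} $J_l^T J_k^T$ of two distinct jumps; no diagonal square term (the source of the nontrivial $\alpha/2$-stable limit in Lemma~\ref{l:Slaplace}) appears, so the whole quantity should be asymptotically negligible.

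The obstacle is that for $\alpha\le 1$ the jumps have no finite first moment, and by Lemma~\ref{l:UV} the normalised products $|J_l^T J_k^T|/R_T^2$ carry tail index $\alpha-\e<1$, so an $L^1$ estimate is unavailable. I would therefore work with a fractional moment of order $\beta\in(0,\min\{1,\alpha\})$. For such $\beta$ the map $x\mapsto x^\beta$ is subadditive, whence
\ba
&\Big(\phi_T^2\Big|\int_0^T X^{\eta^T}_{s-}\,\di\eta^T_s\Big|\Big)^\beta
\le \phi_T^{2\beta}\sum_{\tau_l^T<\tau_k^T\le T}|J_l^T|^\beta|J_k^T|^\beta\,\ex^{-\theta\beta(\tau_k^T-\tau_l^T)}.
\ea
Taking expectations and using that the marks $\{J_k^T\}$ are i.i.d.\ and independent of the Poisson arrival times, Campbell's (Mecke's) formula for the second factorial moment measure of a Poisson process of intensity $\mu_T:=H(R_T)$ factorises the right-hand side:
\ba
&\E\Big[\sum_{\tau_l^T<\tau_k^T\le T}|J_l^T|^\beta|J_k^T|^\beta\,\ex^{-\theta\beta(\tau_k^T-\tau_l^T)}\Big]\\
&\quad=\big(\E|J_1^T|^\beta\big)^2\,\mu_T^2\int_0^T\!\!\int_0^t \ex^{-\theta\beta(t-s)}\,\di s\,\di t.
\ea
(The same factorisation can be obtained by conditioning on $N^T_T=m$ and invoking the Beta law of the interarrival differences.)

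The two remaining pieces are easy to bound. The time integral is at most $T/(\theta\beta)$. For the jump moment, the normalised jump $|J_1^T|/R_T$ has the tail $\bar F_{R_T}$, which by Potter's bounds obeys $\bar F_{R_T}(x)\le C_0(\e)x^{-(\alpha-\e)}$ uniformly in $R_T$ (as established in the proof of Lemma~\ref{l:UV}); hence $\E|J_1^T|^\beta\le c_\beta R_T^\beta$ uniformly in $T$ whenever $\beta<\alpha$. Collecting everything yields
\ba
&\E\Big[\Big(\phi_T^2\Big|\int_0^T X^{\eta^T}_{s-}\,\di\eta^T_s\Big|\Big)^\beta\Big]
\le C_\beta\,\phi_T^{2\beta}R_T^{2\beta}\mu_T^2\,T.
\ea

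Finally I analyse the scaling. Using $\phi_T\in\text{RV}_{-1/\alpha}$, $R_T=T^\rho$ and $\mu_T=H(T^\rho)\in\text{RV}_{-\rho\alpha}$ (in $T$), the right-hand side equals, up to a slowly varying factor, $T^{E(\beta)}$ with $E(\beta)=1-2\rho\alpha+2\beta(\rho-1/\alpha)$. Here the hypothesis surfaces exactly: $E(0)=1-2\rho\alpha<0$ precisely because $\rho>\tfrac{1}{2\alpha}$, so by continuity $E(\beta)<0$ for all sufficiently small $\beta>0$. Choosing such a $\beta$, and absorbing the slowly varying parts into an arbitrarily small power via \eqref{e:est-eps}, gives convergence of the bound to $0$; convergence in $L^\beta$ implies convergence in probability and hence in distribution to $0$, which is the claim. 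The main difficulty throughout is the heavy-tail bookkeeping — the absence of first moments forcing the fractional-moment route — while the quantitative gain is supplied by the vanishing intensity $\mu_T\to0$, which makes pairs of jumps lying within an $O(1)$ time window asymptotically rare, reflected in the factor $\mu_T^2 T$.
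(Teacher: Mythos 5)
Your proof is correct, and it takes a genuinely different --- and considerably shorter --- route than the paper's. The paper starts from the same double-sum representation $\int_0^T X^{\eta^T}_{s-}\,\di \eta^T_s=\sum_{l<k}J_l^T J_k^T\ex^{-\theta(\tau_k^T-\tau_l^T)}$, but then conditions on the jump count $N_T^T=m$, represents the interarrival differences $\tau_{j+k}^T-\tau_j^T$ via Beta-distributed order statistics (the Beta lemma), performs a weighted union bound with weights $w_{k,m}\propto\big((m-1)k^{2/\alpha-\e}\big)^{-1}$, estimates each pair probability by the product-tail bound of Lemma~\ref{l:UV}, and finally resums explicit Poisson series in $m$ and $k$. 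You replace all of this machinery by a single fractional-moment estimate: subadditivity of $x\mapsto x^\beta$ for $\beta\in(0,\min\{1,\alpha\})$, the second-order Campbell--Mecke formula for the marked Poisson process (legitimate here since all terms are nonnegative, so Tonelli applies, and the marks of $\eta^T$ are i.i.d.\ and independent of the arrival times), and the uniform Potter bound giving $\E|J_1^T|^\beta\le c_\beta R_T^\beta$ for $\beta<\alpha-\e$. This bypasses both Lemma~\ref{l:UV} and the Beta-distribution lemma entirely. The exponent bookkeeping also matches: your $E(\beta)=1-2\rho\alpha+2\beta(\rho-1/\alpha)\to 1-2\rho\alpha$ as $\beta\downarrow 0$ reproduces exactly the paper's threshold $\rho>\frac{1}{2\alpha}$ (the paper's two series terms yield exponents $1-2\rho\alpha$ and $-1$), so neither method covers a wider range of $\rho$; and since the lemma is claimed for all $\rho>\frac{1}{2\alpha}$, including $\rho\ge 1/\alpha$ where $\rho-1/\alpha\ge 0$, your continuity argument selecting $\beta$ small is genuinely needed and is invoked correctly. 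As to what each approach buys: yours delivers the slightly stronger conclusion of convergence in $L^\beta$ (hence in probability) in a few lines, while the paper's tail/union-bound argument works purely at the level of probabilities without computing any moment, keeping explicit track of how pairs of jumps at interarrival distance $O(1)$ contribute --- information that is superfluous when, as here, only negligibility rather than a limit law is required.
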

\begin{proof}

The Ornstein--Uhlenbeck process $X^{\eta^T}$ as well as its integral w.r.t.\ $\eta^T$ can be written explicitly in the form of sums:
\ba
X^{\eta^T}_t&=\sum_{j=1}^\infty J^T_j \ex^{-\theta (t-\tau_j^T)}\bI_{[\tau_j^T,\infty)}(t),\\
X^{\eta^T}_{\tau^T_k-}&=\sum_{j=1}^{k-1} J^T_j \ex^{-\theta (\tau^T_k-\tau^T_j)},\quad k\geq 1,\\
\int_0^T X^{\eta^T}_{s-}\,\di \eta^T_s&= \sum_{k=1}^{N_T^T} X^{\eta^T}_{\tau^T_k-}J^T_k 
= \sum_{k=1}^{N_T^T}  J_k^T\sum_{j=1}^{k-1} J_j^T\ex^{-\theta (\tau_k^T-\tau_j^T)} .
\ea
As always, we agree that $\sum_{j=k}^m=0$ for $m<k$.

Note that for $N_T^T=0$ and $N_T^T=1$, $\int_0^T X^{\eta^T}_{s-}\,\di \eta^T_s=0$. For $m\geq 2$, on the event $N_T^T=m$ we get 
the estimate
\ba
\label{e:inttt}
\Big|\int_0^T X^{\eta^T}_{s-}\,\di \eta^T_s\Big|
&\leq  \sum_{k=2}^{m}  |J^T_k|\sum_{j=1}^{k-1} |J^T_j|\ex^{-\theta (\tau_k^T-\tau_j^T)}
=\sum_{j=1}^{m-1}\sum_{k=1}^{m-j}        |J_{j+k}^T||J^T_j|\ex^{-\theta (\tau_{j+k}^T-\tau_j^T)}.
\ea
We also take into account that for all $m\geq 2$
\ba
\Law\Big( |J_{j+k}^T ||J_j^T |\ex^{-\theta (\tau_{j+k}^T -\tau_j^T )}\Big| N_T^T=m\Big)  \stackrel{\di}{=}
 \Law\Big( R_T^2\cdot U_T\cdot V_T\cdot \ex^{ -\theta T \sigma_{k}}\Big) ,\\\quad 1\leq j< j+k\leq m,
\ea
where $U_T$, $V_T$ are iid random variables with the probability law
\ba
\P(U_T\geq x)=\frac{H(x R_T)}{H(R_T)},\quad x\geq 1,
\ea
and $\sigma_k$, $k=1,\dots,m-1$, is a $\text{Beta}(k,m-1+k)$-distributed random variable independent of $U_T$ and $V_T$ 
with the probability density \eqref{e:betadensity}.
For each $m\geq 0$ denote for convenience by $\P^{(m)}_T$ the conditional law $\P( \,\cdot\, |N_T^T=m)$. 

For some $\e\in (0,\frac{2-\alpha}{\alpha})$ which will be chosen sufficiently small later, and for each $m\geq 2$ 
define the family of positive weights
\ba
w_{k,m}&= \Big(C(\alpha,\e) \cdot (m-1)\cdot  k^{\frac{2}{\alpha}-\e}\Big)^{-1},\quad k=1,\dots, m-1,\\
\ea
where
\ba
C(\alpha,\e)&=\sum_{k=1}^\infty k^{-\frac{2}{\alpha}+\e}<\infty 
\ea
is the normalizing constant.
With this construction for each $m\geq 2$
\ba
\label{e:w}
\sum_{k=1}^{m-1} \sum_{j=1}^{m-k} w_{k,m}=
\sum_{k=1}^{m-1}(m-k)w_{k,m}=\frac{1}{C(\alpha,\e) } \sum_{k=1}^{m-1} \frac{m-k}{m-1} \cdot  k^{-\frac{2}{\alpha}+\e} \leq 1.
\ea
Let $\gamma>0$. In order to show that the sum \eqref{e:inttt} multiplied by $\phi^2_T$ converges to zero, 
we take into account \eqref{e:w} and write
\ba
\P^{(m)}_T\Big(\phi_T^2 \sum_{j=1}^{m-1}&\sum_{k=1}^{m-j}        |J_{j+k}^T||J_j^T|\ex^{-\theta (\tau_{j+k}-\tau_j)} >\gamma \Big)\\
&\leq 
\P^{(m)}_T\Big(\phi_T^2 \sum_{k=1}^{m-1}\sum_{j=1}^{m-k}  |J_{j+k}^T||J_j^T|\ex^{-\theta (\tau_{j+k}-\tau_j)} >\gamma   
\sum_{k=1}^{m-1}\sum_{j=1}^{m-k} w_{k,m}
\Big)\\
&\leq \sum_{k=1}^{m-1}\sum_{j=1}^{m-k} 
\P \Big(\phi_T^2       R_T^2 U_T V_T\ex^{-\theta T\sigma_{k}} >\gamma w_{k,m} \Big)\\
&=\sum_{k=1}^{m-1 }(m-k)
\P \Big(\phi_T^2       R_T^2 \cdot U_T V_T\ex^{-\theta T\sigma_{k}} >\gamma w_{k,m}  \Big).
\ea
Applying Lemma \ref{l:UV} and the independence of $U_TV_T$ and $\sigma_k$ we obtain for some $\e>0$ 
\ba
p_{k,m}(T)&=\P \Big( \phi_T^2       R_T^2 \cdot U_T V_T\ex^{-\theta T\sigma_{k}}  >\gamma w_{k,m} \Big)\\
&=\frac{m!}{(k-1)!(m-k)!}
\int_0^1  \P\Big( U_TV_T  >\gamma\frac{w_{k,m}}{\phi_T^2 R_T^2}   \cdot \ex^{\theta T u }      \Big)  (1-u)^{m-k}u^{k-1}  \,\di u\\
&\leq  C(\e)  
\frac{m!}{(k-1)!(m-k)!}  \Big(\gamma\frac{w_{k,m}}{\phi_T^2 R_T^2}\Big)^{-\alpha+\e}
\int_0^1 \ex^{-\theta T u(\alpha-\e) }  (1-u)^{m-k}u^{k-1}  \,\di u\\
&\leq C(\e) \frac{m!}{(k-1)!(m-k)!}   \Big(\gamma\frac{w_{k,m}}{\phi_T^2 R_T^2}\Big)^{-\alpha+\e}
\int_0^\infty \ex^{-\theta T u(\alpha-\e) }  u^{k-1}  \,\di u\\
&\leq
C(\e,\alpha,\gamma)
\frac{m!}{(k-1)!(m-k)!}   
\Big( k^{\frac{2}{\alpha}-\e} (m-1)  T^{-\frac{2}{\alpha} + 2\rho+\e}  \Big)^{\alpha-\e}
\frac{(k-1)!}{(\theta  T(\alpha-\e))^k}\\
&\leq C(\e,\alpha,\gamma) \cdot T^{(-\frac{2}{\alpha} + 2\rho+\e)(\alpha-\e)}\cdot
\frac{m!m^2 k^2}{ (m-k)!}   \cdot
\frac{1}{(\theta  T(\alpha-\e))^k},\\
\ea
where we have used the well known relation $\int_0^\infty a u^n \ex^{-a u}\,\di u= n!/a^n$, $a>0$, $n \geq 0$, as well as 
the elementary estimates $(m-1)^{\alpha-\e}\leq m^2$ and $k^{(\frac{2}{\alpha}-\e)(\alpha-\e)}\leq k^2$ which are valid for $\e>0$
and $\alpha\in (0,2)$.

Hence
\ba
\label{e:1}
p(T):=\sum_{m=2}^\infty &\Big[ \P(N_T^T=m)\sum_{k=1}^{m-1}(m-k) p_{k,m}(T)\Big]\\
&=C(\e,\alpha,\gamma)\cdot T^{(-\frac{2}{\alpha} + 2\rho+\e)(\alpha-\e)}\\
&\qquad\times \sum_{m=2}^\infty \Big[ \ex^{-T\cdot H(R_T)}\frac{(T\cdot H(R_T))^m}{m!}  
\sum_{k=1}^{m-1}(m-k) \frac{m!m^2 k^2}{ (m-k)!}   \cdot
\frac{1}{(\theta  T(\alpha-\e))^k} \Big]\\
&= C(\e,\alpha,\gamma)\cdot \ex^{-T\cdot H(R_T)}\cdot T^{(-\frac{2}{\alpha} + 2\rho+\e)(\alpha-\e)} \\
&\qquad \times
\sum_{k=1}^\infty \frac{k^2}{(\theta  T(\alpha-\e))^k}            \sum_{m=k+1}^\infty 
m^2 \frac{(T\cdot H(R_T))^m}{ (m-k-1)!}   .  
\ea
To evaluate the inner sum we use the formula $\sum_{j=0}^\infty (j+k)^2a^j/j!=\ex^a (a^2+2ak+ a+k^2)$  
to obtain
\ba
\label{e:2}
\sum_{m=k+1}^\infty  m^2 \frac{(T\cdot H(R_T))^m}{ (m-k-1)!}   
&=(T\cdot H(R_T))^{k+1}\sum_{j=0}^\infty  (j+k+1)^2 \frac{(T\cdot H(R_T))^j}{ j!}\\
&\leq 3\Big(   (T\cdot H(R_T) )^{k+3} + (k+1)^2 (T\cdot H(R_T) )^{k+1} \Big)   \ex^{T\cdot H(R_T)}.
\ea
Combining \eqref{e:1} and \eqref{e:2}, it is left to estimate two summands. For the first one, we use 
the formula $\sum_{k=1}^\infty k^2 q^k= q(q+1)/(1-q)^3$, $|q|<1$, to get
\ba
S_1=  \sum_{k=1}^\infty \frac{k^2}{(\theta  T(\alpha-\e))^k}  (T\cdot H(R_T) )^{k+3}
\leq C_1 \cdot T^3 \cdot H(R_T)^{4} .
\ea
For the second summand, we use 
the formula $\sum_{k=1}^\infty k^2(k+1)^2 q^k= 4q(q^2+4q+1)/(1-q)^5$, $|q|<1$, to get
\ba
S_2=  
\sum_{k=1}^\infty  \frac{k^2(k+1)^2}{(\theta  T(\alpha-\e))^k} (T\cdot H(R_T) )^{k+1} \leq 
C_2 \cdot T\cdot H(R_T)^2.
\ea
Combining 
\eqref{e:1} with the bounds for $S_1$ and $S_2$ we obtain
\ba
p(T)\leq C\cdot T^{(-\frac{2}{\alpha} + 2\rho+\e)(\alpha-\e)}
\cdot\Big( T^{3- 4\rho(\alpha-\e)}  +  T^{1-2\alpha\rho +\e }\Big).
\ea
Since $\rho>\frac{1}{2\alpha}$, one can choose $\e>0$ sufficiently small
to obtain the limit $p(T)\to 0$, $T\to\infty$. 
\end{proof}

\section{Proofs of the main results\label{s:proofs}}  

\noindent  
\textit{Proof of Theorem \ref{t:X2}.} 
Let $\rho\in (\frac{1}{2\alpha},\frac{1}{\alpha})$ be fixed. With the help of the decomposition \eqref{e:XX} we may write
\ba
\label{e:ab}
\int_0^T X_s^2\,\di s= \int_0^T (X_s^{\eta^T})^2\,\di s + \int_0^T (X_s^T)^2\,\di s 
+ 2 \int_0^T X_s^T \cdot X_s^{\eta^T}\,\di s .
\ea
Then by Lemma \ref{l:OUconv}, $\phi^2_T\int_0^T (X_s^T)^2\,\di s\stackrel{\di}{\to}  0$. Recall that $X^{\eta^T}$ satisfies the SDE
\ba
\di X^{\eta^T}_t=-\theta X^{\eta^T}_t\di t+\di \eta_t^T,\quad  X^{\eta^T}_0=0.
\ea
The It\^o formula applied to the process $X^{\eta^T}$ yields
\ba
\label{e:aa}
\big(X^{\eta^T}_T\big)^2= -2\theta \int_0^T  \big( X^{\eta^T}_{s} \big)^2\,\di s 
+2 \int_0^T   X^{\eta^T}_{s-} \,\di \eta^T_s + [\eta^T]_T.
\ea
The decomposition \eqref{e:XX} implies that $(X^{\eta^T}_T)^2\leq 2 X_T^2+ 2 (X^{T}_T)^2$.
Since for $\theta>0$ the process $X$ has an invariant distribution (see, e.g.\ \cite[Theorem 17.5]{Sato-99}), we get that
$\phi^2_T X_T^2\to 0$ in law. 
On the other hand, $\phi^2_T(X^{T}_T)^2\to 0$ in law by Lemma \ref{l:OUconv}.  
Therefore, Lemmas \ref{l:Slaplace}, \ref{l:Xeta} and \eqref{e:aa} yield
\ba
\phi_T^2 \int_0^T  \big( X^{\eta^T}_{s} \big)^2\,\di s \stackrel{\di}{\to} \frac{\mathcal S^{(\alpha/2)}}{2\theta},\quad T\to\infty.
\ea
Eventually, the last integral in \eqref{e:ab} multiplied by $\phi_T^2$ converges to 0 by the Cauchy--Schwarz inequality. \hfill $\Box$

\bigskip

\noindent
\textit{Proof of Corollary \ref{t:XWX2}.}
The decomposition 
\ba
X_t=X_t^T+X_t^{\eta^T}= X_0\ex^{-\theta t} + b_T\frac{1-\ex^{-\theta s}}{2\theta} + \int_0^t \ex^{-\theta(t-s)}\,\di \xi^T_s +X^{\eta^T}_t
\ea
allows us to write
\ba
&\int_0^T X_s\,\di W_s=\int_0^T X_s^T\,\di W_s+\int_0^T X_s^{\eta^T}\,\di W_s
\ea
as well as \eqref{e:ab}.
It is easy to check that $\phi_T\int_0^T X_s^T\,\di W_s\stackrel{\di}{\to} 0$. Indeed,
due to the independence of $X_0$ and $W$
\ba
\phi_T \int_0^T X_0\ex^{-\theta s}\,\di W_s = \phi_T \cdot X_0 \cdot \int_0^T\ex^{-\theta s}\,\di W_s \to 0\quad \text{a.s.}
\ea
and obviously by Lemma \ref{l:Xeta}
\ba
\phi_T b_T \int_0^T  (1-\ex^{-\theta s})  \,\di W_s \stackrel{\di}{\to} 0.
\ea
Finally
by the estimate \eqref{e:xi} of Lemma \ref{l:OUconv}
\ba
\E\Big[\phi_T \int_0^T \Big(X^{T}_s&- X_0\ex^{-\theta s} -b_T\frac{1-\ex^{-\theta s}}{2\theta}    \Big)\,\di W_s\Big]^2
=\phi_T^2 \E\Big[\int_0^T \int_0^s \ex^{-\theta(s-r)}\,\di \xi^T_r  \,\di W_s\Big]^2\\
&=\phi_T^2 \int_0^T \E\Big[\int_0^s \ex^{-\theta(s-r)}\,\di \xi^T_r\Big]^2  \,\di s\\
&=\phi_T^2 \int_0^T \int_0^s \ex^{-2\theta(s-r)}\,\di r  \,\di s\cdot \Big(\sigma^2 +\int_{|z|\leq R_T}z^2\nu(\di z)\Big)
 \to 0,\quad T \to 0.
\ea
Taking into account the argument in the proof of Theorem \ref{t:X2}, we conclude
that it is sufficient to consider the limiting behaviour of the pair 
$\big( \phi_T \int_0^T X^{\eta^T}_s\,\di W_s,  \phi_T^2 \int_0^T (X^{\eta^T}_s)^2 \,\di s  \big)$.

The processes $\eta^T$ and $W$ are independent and
\ba
M^T_t=\int_0^t X^{\eta^T}_s\,\di W_s,\quad t\geq 0,
\ea
is a continuous local martingale with the angle bracket
\ba
\langle M^T\rangle_t=\int_0^t \big(X_s^{\eta^T}\big)^2\,\di s,
\ea
which is independent of $W$. 
Then for $u,v\in\bR$ we  get
\ba
\E \exp\Big( \i u \phi_T M^T_T &+\i v \phi_T^2 \langle M^T\rangle_T \Big) =
\E \Big[ \E\Big[ \exp\Big( \i u \phi_T M^T_T +\i v \phi_T^2 \langle M^T\rangle_T \Big) \Big| \rF^{\eta^T}_T \Big]\\
&=\E\Big[\exp\Big( \i v \phi_T^2 \langle M^T\rangle_T \Big) \E \Big [ \exp\Big( \i u \phi_T M^T_T    \Big)\Big|\rF^{\eta^T}_T \Big]\Big] \\
&=\E \Big[\exp\Big( \i v \phi_T^2 \langle M^T\rangle_T \Big) \exp\Big( -\frac{u^2}{2}\phi_T^2 \langle M^T\rangle_T    \Big) \Big] \\
&=\E\exp\Big( \big(\i v -\frac{u^2}{2}\big) \phi_T^2 \langle M^T\rangle_T    \Big)\\
&\to\E\exp\Big( \big(\i v -\frac{u^2}{2}\big)   \frac{\mathcal S^{(\alpha/2)}}{2\theta_0}  \Big) \\
&=\E \exp\Big( \i u \cN \sqrt{  \frac{\mathcal S^{(\alpha/2)}}{2\theta_0} } +\i v \frac{\mathcal S^{(\alpha/2)}}{2\theta_0} \Big).
\ea
 \hfill $\Box$

\bigskip

\noindent
\textit{Proof of Theorem \ref{t:main}.} The statement of the theorem follows immediately from Proposition \ref{p:Lratio} and 
Corollary \ref{t:XWX2}.
 \hfill $\Box$

\bigskip

\noindent
\textit{Proof of Corollary \ref{c:2}.} 
The relation \eqref{e:MLE} follows from Proposition \ref{p:Lratio}. Due to the linear-quadratic form of the likelihood ratio,
the maximum likelihood estimator coincides with the so-called central sequence. 
This implies the asymptotic efficiency in the aforementioned sense.
The limit \eqref{e:AB} follows from Corollary \ref{t:XWX2}.

   \hfill $\Box$
   
  \bigskip

\noindent
\textit{Proof of Remark \ref{r:D}.} For $x>0$
\ba
\P\Big(\frac{|\cN|}{\sqrt{\mathcal S^{(\alpha/2)}}}> x\Big) &\leq \P\Big(\mathcal S^{(\alpha/2)}
\leq x^{\alpha-2}\Big) + \P(|\cN|> x^{\alpha/2})=p_1(x)+p_2(x).
\ea
By the well known property of the Gaussian distribution 
\ba
p_2(x)\leq \sqrt{\frac{2}{\pi}}\frac{\ex^{-x^{\alpha}/2}}{x^{\alpha/2}}.
\ea
To estimate $p_1(x)$, we apply the exponential Chebyshev inequality to get
\ba
p_1(x)=\P\Big(\mathcal S^{(\alpha/2)}\leq x^{\alpha-2}\Big) &\leq \inf_{\lambda>0 } \ex^{\lambda x^{\alpha-2}} \E \ex^{-\lambda S^{(\alpha/2)}}\\
&=\inf_{\lambda>0 } \ex^{\lambda x^{\alpha-2} -\Gamma(1-\frac{\alpha}{2})\lambda^{\alpha/2}   }\leq
\exp\Big( -C(\alpha) x^{\alpha}  \Big)
\ea
for some $C(\alpha)>0$. Hence the estimate \eqref{e:tail} follows.


\begin{thebibliography}{32}
\providecommand{\natexlab}[1]{#1}
\providecommand{\url}[1]{\texttt{#1}}
\expandafter\ifx\csname urlstyle\endcsname\relax
  \providecommand{\doi}[1]{doi: #1}\else
  \providecommand{\doi}{doi: \begingroup \urlstyle{rm}\Url}\fi

\bibitem[Applebaum(2009)]{Applebaum-09}
D.~Applebaum.
\newblock \emph{{{L{\'e}}vy Processes and Stochastic Calculus}}, volume 116 of
  \emph{{Cambridge Studies in Advanced Mathematics}}.
\newblock Cambridge University Press, Cambridge, second edition, 2009.

\bibitem[Balakrishnan and Nevzorov(2003)]{BalNev-03}
N.~Balakrishnan and V.~B. Nevzorov.
\newblock \emph{{A Primer on Statistical Distributions}}.
\newblock John Wiley \& Sons, Hoboken, NJ, 2003.

\bibitem[Bingham et~al.(1987)Bingham, Goldie, and Teugels]{BinghamGT-87}
N.~H. Bingham, C.~M. Goldie, and J.~L. Teugels.
\newblock \emph{{Regular Variation}}, volume~27 of \emph{{Encyclopedia of
  Mathematics and its applications}}.
\newblock Cambridge University Press, 1987.

\bibitem[Cl{\'e}ment and Gloter(2015)]{clement2015local}
E.~Cl{\'e}ment and A.~Gloter.
\newblock {Local asymptotic mixed normality property for discretely observed
  stochastic differential equations driven by stable L{\'e}vy processes}.
\newblock \emph{Stochastic Processes and their Applications}, 125\penalty0
  (6):\penalty0 2316--2352, 2015.

\bibitem[Cl{\'e}ment and Gloter(2019)]{ClementG-19}
E.~Cl{\'e}ment and A.~Gloter.
\newblock {Joint estimation for SDE driven by locally stable L{\'e}vy
  processes}.
\newblock {Preprint}, 2019.
\newblock {HAL Id: hal-02125428}.

\bibitem[Cl{\'e}ment et~al.(2019)Cl{\'e}ment, Gloter, and
  Nguyen]{clement2019lamn}
E.~Cl{\'e}ment, A.~Gloter, and H.~Nguyen.
\newblock {LAMN property for the drift and volatility parameters of a SDE
  driven by a stable L{\'e}vy process}.
\newblock \emph{ESAIM: Probability and Statistics}, 23:\penalty0 136--175,
  2019.

\bibitem[Gloter et~al.(2018)Gloter, Loukianova, and Mai]{gloter2018jump}
A.~Gloter, D.~Loukianova, and H.~Mai.
\newblock {Jump filtering and efficient drift estimation for L{\'e}vy-driven
  SDEs}.
\newblock \emph{The Annals of Statistics}, 46\penalty0 (4):\penalty0
  1445--1480, 2018.

\bibitem[H{\"o}pfner(2014)]{hoepfner2014asymptotic}
R.~H{\"o}pfner.
\newblock \emph{{Asymptotic Statistics: With a View to Stochastic Processes}}.
\newblock Walter de Gruyter, Berlin, 2014.

\bibitem[Hu and Long(2007)]{hu2007parameter}
Y.~Hu and H.~Long.
\newblock {Parameter estimation for Ornstein-Uhlenbeck processes driven by
  $\alpha$-stable L{\'e}vy motions}.
\newblock \emph{Communications on Stochastic Analysis}, 1\penalty0
  (2):\penalty0 1, 2007.

\bibitem[Hu and Long(2009{\natexlab{a}})]{hu2009least}
Y.~Hu and H.~Long.
\newblock {Least squares estimator for Ornstein--Uhlenbeck processes driven by
  $\alpha$-stable motions}.
\newblock \emph{Stochastic Processes and their Applications}, 119\penalty0
  (8):\penalty0 2465--2480, 2009{\natexlab{a}}.

\bibitem[Hu and Long(2009{\natexlab{b}})]{yaozhong2009singularity}
Y.~Hu and H.~Long.
\newblock On the singularity of least squares estimator for mean-reverting
  $\alpha$-stable motions.
\newblock \emph{Acta Mathematica Scientia}, 29\penalty0 (3):\penalty0 599--608,
  2009{\natexlab{b}}.

\bibitem[Ivanenko and Kulik(2014)]{ivanenko2014lan}
D.~Ivanenko and A.~Kulik.
\newblock {LAN} property for discretely observed solutions to {L}{\'e}vy driven
  {SDE}'s.
\newblock \emph{Modern Stochastics: Theory and Applications}, 1\penalty0
  (1):\penalty0 33--47, 2014.

\bibitem[Jacod and Shiryaev(2003)]{JacodS-03}
J.~Jacod and A.~N. Shiryaev.
\newblock \emph{{Limit Theorems for Stochastic Processes}}, volume 288 of
  \emph{{Grundlehren der Mathematischen Wissenschaften}}.
\newblock Springer, second edition, 2003.

\bibitem[Jessen and Mikosch(2006)]{jessen2006regularly}
A.~H. Jessen and T.~Mikosch.
\newblock Regularly varying functions.
\newblock \emph{Publications de L'Institut Mathematique. Nouvelle s\'erie},
  80(94):\penalty0 171--192, 2006.

\bibitem[Kawai(2013)]{kawai2013local}
R.~Kawai.
\newblock {Local asymptotic normality property for Ornstein--Uhlenbeck
  processes with jumps under discrete sampling}.
\newblock \emph{Journal of Theoretical Probability}, 26\penalty0 (4):\penalty0
  932--967, 2013.

\bibitem[Kohatsu-Higa et~al.(2017)Kohatsu-Higa, Nualart, and
  Tran]{kohatsu2017lan}
A.~Kohatsu-Higa, E.~Nualart, and N.~K. Tran.
\newblock {LAN property for an ergodic diffusion with jumps}.
\newblock \emph{Statistics: A Journal of Theoretical and Applied Statistics},
  51\penalty0 (2):\penalty0 419--454, 2017.

\bibitem[K{\"u}chler and S\o{}rensen(1997)]{KuechlerS-97}
U.~K{\"u}chler and M.~S\o{}rensen.
\newblock \emph{{Exponential Families of Stochastic Processes}}.
\newblock {Springer Series in Statistics}. Springer-Verlag, New York, 1997.

\bibitem[Le~Cam and Yang(2000)]{lecam2000}
L.~Le~Cam and G.~L. Yang.
\newblock \emph{Asymptotics in Statistics: Some Basic Concepts}.
\newblock Springer Series in Statistics. Springer-Verlag, New York, second
  edition, 2000.

\bibitem[Long(2009)]{long2009least}
H.~Long.
\newblock {Least squares estimator for discretely observed Ornstein--Uhlenbeck
  processes with small L{\'e}vy noises}.
\newblock \emph{Statistics \& Probability Letters}, 79\penalty0 (19):\penalty0
  2076--2085, 2009.

\bibitem[Mai(2012)]{mai2012diss}
H.~Mai.
\newblock \emph{{Drift Estimation for Jump Diffusions}}.
\newblock {PhD thesis}, Humboldt-Universit{\"a}t zu Berlin, 2012.
\newblock {https://doi.org/10.18452/16590}.

\bibitem[Mai(2014)]{Mai-14}
H.~Mai.
\newblock {Efficient maximum likelihood estimation for L{\'e}vy-driven
  Ornstein--Uhlenbeck processes}.
\newblock \emph{Bernoulli}, 20\penalty0 (2):\penalty0 919--957, 2014.

\bibitem[Masuda(2013)]{masuda2013convergence}
H.~Masuda.
\newblock {Convergence of Gaussian quasi-likelihood random fields for ergodic
  L{\'e}vy driven SDE observed at high frequency}.
\newblock \emph{The Annals of Statistics}, 41\penalty0 (3):\penalty0
  1593--1641, 2013.

\bibitem[Masuda(2015)]{masuda2015parametric}
H.~Masuda.
\newblock {Parametric estimation of L{\'e}vy processes}.
\newblock In \emph{L{\'e}vy Matters IV}, volume 2128 of \emph{Lecture Notes in
  Mathematics}, pages 179--286. Springer, 2015.

\bibitem[Masuda(2019)]{masuda2019non}
H.~Masuda.
\newblock {Non-Gaussian quasi-likelihood estimation of SDE driven by locally
  stable L{\'e}vy process}.
\newblock \emph{Stochastic Processes and their Applications}, 129\penalty0
  (3):\penalty0 1013--1059, 2019.

\bibitem[Nguyen(2018)]{nguyen2018estimation}
T.~T.~H. Nguyen.
\newblock \emph{Estimation of the Jump Processes}.
\newblock {PhD thesis}, Universit{\'e} {P}aris-{E}st, 2018.
\newblock {HAL Id: tel-02127797}.

\bibitem[Resnick(2007)]{resnick2007heavy}
S.~I. Resnick.
\newblock \emph{{Heavy-Tail Phenomena: Probabilistic and Statistical
  Modeling}}.
\newblock Springer Series in Operations Research and Financial Engineering.
  Springer Science \& Business Media, New York, 2007.

\bibitem[Rosi{\'n}ski and Woyczy{\'n}ski(1987)]{rosinski1987multilinear}
J.~Rosi{\'n}ski and W.~A. Woyczy{\'n}ski.
\newblock {Multilinear forms in Pareto-like random variables and product random
  measures}.
\newblock In \emph{Colloquium Mathematicum}, volume~1, pages 303--313, 1987.

\bibitem[Sato(1999)]{Sato-99}
K.~Sato.
\newblock \emph{{L{\'e}vy Processes and Infinitely Divisible Distributions}},
  volume~68 of \emph{{Cambridge Studies in Advanced Mathematics}}.
\newblock Cambridge University Press, Cambridge, 1999.

\bibitem[S\o{}rensen(1991)]{Soerensen-91}
M.~S\o{}rensen.
\newblock {Likelihood methods for diffusions with jumps}.
\newblock In N.~U. Prabhu and I.~V. Basawa, editors, \emph{{Statistical
  Inference in Stochastic Processes}}, pages 67--105. Marcel Dekker, Inc., New
  York, 1991.

\bibitem[Tran(2017)]{tran2017lan}
N.~K. Tran.
\newblock {LAN property for an ergodic Ornstein--Uhlenbeck process with Poisson
  jumps}.
\newblock \emph{Communications in Statistics-Theory and Methods}, 46\penalty0
  (16):\penalty0 7942--7968, 2017.

\bibitem[Uehara(2019)]{uehara2019statistical}
Y.~Uehara.
\newblock {Statistical inference for misspecified ergodic L{\'e}vy driven
  stochastic differential equation models}.
\newblock \emph{Stochastic Processes and their Applications}, 129\penalty0
  (10):\penalty0 4051--4081, 2019.

\bibitem[Zhang and Zhang(2013)]{zhang2013least}
S.~Zhang and X.~Zhang.
\newblock {A least squares estimator for discretely observed
  Ornstein--Uhlenbeck processes driven by symmetric $\alpha$-stable motions}.
\newblock \emph{Annals of the Institute of Statistical Mathematics},
  65\penalty0 (1):\penalty0 89--103, 2013.

\end{thebibliography}
%

\end{document}